\newtheorem{definition}{Definition}
\newtheorem{remark}{Remark}
\newtheorem{theorem}{Theorem}
\newtheorem{proposition}{Proposition}
\newtheorem{problem}{Problem}
\newenvironment{proof}{\textbf{Proof} :}{$\square$}
\title{Potential Methods for Extending Galvin and J\'onsson's Characterization of Distributive Sublattices of Free Lattices}
\author{Brian T. Chan}
\date{Department of Mathematics and Statistics, The University of Calgary}
\begin{document}

\maketitle

\begin{abstract}
In 1959, F.Galvin and B.J\'onsson characterized distributive sublattices of free lattices in their paper \cite{DSFL}. In this paper, I will create new proofs to a portion of Galvin and J\'onsson's work in \cite{DSFL}. Based on these new proofs, I will explore possible generalizations of F.Galvin and B.J\'onsson's work by defining \emph{spanning pairs} and proving partial results which may help with analysing finite width sublattices of free lattices; and by making some new observations on finitely generated lattices over semidistributive varieties.

The work done in this paper may assist in attacking the following long-standing open problem: Which countable lattices are isomorphic to a sublattice of a free lattice?
\end{abstract}

\section{Distributive Sublattices of Free Lattices}

Among lattices known to be isomorphic to countable sublattices of free lattices, those isomorphic to distributive sublattices of free lattices are the easiest to describe. In 1959, F. Galvin and B. J\'onsson characterized, up to isomorphism, distributive sublattices of free lattices in their paper \cite{DSFL}. Their characterization will be over-viewed in this section and I will derive new proofs to a portion of their work. These new proofs have some similarities with work in \cite{DSFL} from F. Galvin and B. J\'onsson and with work from 1974 in \cite{OLE} by W. Poguntke and I. Rival.\\

We first introduce some basic operations. If $K$ and $L$ are lattices, then $K \times L$ is their direct product where $(p,q) \vee (r,s) = (p \vee r, q \vee s)$ and $(p,q) \wedge (r,s) = (p \wedge r, q \wedge s)$ for all $p \in K$ and $q \in L$. Let $\langle L ; \leq_L \rangle$ and $\langle K ; \leq_K \rangle$ be lattices such that $L \cap K = \varnothing$. Then the \emph{linear sum} $L \oplus K$ of $L$ and $K$ is the lattice $\langle L \cup K ; \leq \rangle$ such that: \\

(1) For all $p,q \in L$, $p \leq q$ if and only if $p \leq_L q$.\\

(2) For all $p,q \in K$, $p \leq q$ if and only if $p \leq_K q$.\\

(3) For all $p \in L$ and $q \in K$, $p \leq q$.\\

Generalizing this operation, one may consider a family of lattices $\{ \langle L_i ; \leq_i \rangle : i \in I \}$ indexed by a partially ordered set $\langle I ; \leq_I \rangle$ such that for all indices $i \neq j$ in $I$, $L_i \cap L_j = \varnothing$. Then the \emph{lexicographic sum} of this family is the lattice $\langle \bigcup_{i \in I} L_i ; \leq \rangle$ with the binary relation $\leq$ defined as follows: \\

(1) For all $p,q \in L_i$, $p \leq q$.\\

(2) For all $i \leq_I j$ in $\langle I ; \leq_I \rangle$, if $i \neq j$, then for all $p \in L_i$ and $q \in L_j$, $p \leq q$.\\

Inspired by \cite{ILO} and \cite{DSFL} we will define, in this paper, a \emph{linear sum indexed by $I$} to be the lexicographic sum of the family $\{ \langle L_i ; \leq_i \rangle : i \in I \}$ indexed by a chain $I$. We denote such a lexicographic sum by $\bigoplus_{i \in I}L_i$.\\

F. Galvin and B. J\'onsson's characterization of lattices isomorphic to distributive sublattices of free lattices is as follows. 
We use notation from \cite{ILO} by letting $\textbf{n} = \langle \{0,1, \dots, n-1 \} ; \leq \rangle$ with $0 < 1 < \dots < n-1$:\\

\begin{theorem}[F. Galvin and B. J\'onsson] \label{the characterization}
	Let $L$ be a distributive lattice. Then $L$ is isomorphic to a sublattice of a free lattice if and only if $L \cong \bigoplus_{i \in I} L_i$ where $\bigoplus_{i \in I} L_i$ is a linear sum indexed by a countable chain $I$ and for all $i \in I$: $|L_i| = 1$, $L_i \cong \textbf{2} \times \textbf{2} \times \textbf{2}$, or $L = \textbf{2} \times C$ where $C$ is a countable chain.
\end{theorem}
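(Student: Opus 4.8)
The plan is to prove the two implications separately, exploiting throughout that a distributive lattice is automatically semidistributive, so that on the necessity side the only inherited constraint with real content is Whitman's condition together with the fact that free lattices contain no uncountable chains. I will write (W) for Whitman's condition: $a \wedge b \leq c \vee d$ implies $a \leq c \vee d$, or $b \leq c \vee d$, or $a \wedge b \leq c$, or $a \wedge b \leq d$. Since (W) and semidistributivity pass to sublattices, any $L$ that embeds in a free lattice satisfies both; the engine of the argument is to read off the allowed block structure from (W) and the allowed cardinalities from chain-countability.

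For necessity, first I would show that the two ``grids'' $\textbf{2} \times \textbf{2} \times \textbf{2} \times \textbf{2}$ and $\textbf{3} \times \textbf{3}$ both violate (W), and therefore occur as a sublattice of no $L$. Concretely, identifying $\textbf{2}^{4}$ with subsets of $\{1,2,3,4\}$, the quadruple $a = \{1,2,3\}$, $b = \{1,2,4\}$, $c = \{1\}$, $d = \{2\}$ gives $a \wedge b = \{1,2\} \leq \{1,2\} = c \vee d$ while defeating all four conclusions of (W); and in $\textbf{3} \times \textbf{3}$ the quadruple $a = (2,1)$, $b = (1,2)$, $c = (1,0)$, $d = (0,1)$ does the same with $a \wedge b = (1,1) = c \vee d$. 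Thus $L$ is a distributive lattice with no sublattice isomorphic to $\textbf{2}^{4}$ or to $\textbf{3} \times \textbf{3}$. Next I would translate these exclusions into the structure of $L$ by passing to its poset $P$ of (completely) join-irreducible elements: excluding $\textbf{2}^{4}$ bounds the width of $P$ by $3$, while excluding $\textbf{3} \times \textbf{3}$ forbids two incomparable two-element chains in $P$. A poset subject to both restrictions must be an ordinal sum of pieces each of which is a single point, a three-element antichain, or the disjoint union of a point and a chain; dualizing this decomposition back through the representation exhibits $L$ as a linear sum whose summands are exactly $|L_i| = 1$, $\textbf{2} \times \textbf{2} \times \textbf{2}$, and $\textbf{2} \times C$. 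Finally, since choosing one element from each summand produces a chain order-isomorphic to the index set $I$, and likewise each $C$ sits inside $L$ as a chain, the theorem that free lattices have no uncountable chains forces $I$ and every $C$ to be countable.

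For sufficiency, I would first embed each admissible block. Points are trivial; the finite lattice $\textbf{2} \times \textbf{2} \times \textbf{2}$ is semidistributive and satisfies (W), so it embeds by the finite embeddability criterion (or by exhibiting it directly as a sublattice generated by three suitable elements of a free lattice); and $\textbf{2} \times C$ for a countable chain $C$ embeds by constructing an explicit ``ladder'' inside a free lattice, placing the rungs along a countable scale of join and meet combinations of free generators. I would then prove a gluing lemma: the linear sum $\bigoplus_{i \in I} L_i$ of lattices each embeddable in a free lattice, indexed by a countable chain $I$, is again embeddable. The verification that the sum satisfies (W) and is semidistributive is local except at the seams between consecutive summands, where comparability of the whole lower part with the whole upper part makes one of the disjuncts of (W) hold automatically; the construction of the actual embedding then proceeds by assembling the block-embeddings along a scale indexed by $I$ and taking the sublattice they generate.

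The hard part will be the structural decomposition in the necessity direction --- showing rigorously that the two excluded grids, in the presence of distributivity, force the clean ordinal-sum decomposition into precisely the three block types, with no residual pathology at limit points of $I$ --- and, in the sufficiency direction, making the infinite gluing uniform over an arbitrary countable chain $I$ while preserving (W) across all seams simultaneously. These two points, rather than the (W)-computations or the single-block embeddings, are where the genuine work lies.
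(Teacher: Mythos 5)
Your necessity argument has a genuine gap at its central step, the passage through the poset $P$ of join-irreducibles. First, even for \emph{finite} distributive lattices the claimed classification is false as stated: take $P_0$ to be the disjoint union of a two-element chain $x < w$ and two isolated points $y,z$. Then $P_0$ has width $3$ and contains no two incomparable two-element chains, so it satisfies both of your restrictions; yet $P_0$ is not an ordinal sum of points, three-element antichains, and point-plus-chain pieces (since $y$ is incomparable to everything, no nontrivial ordinal cut exists), and its downset lattice is $\textbf{3} \times \textbf{2} \times \textbf{2} \cong \textbf{2}\times\textbf{2}\times\textbf{3}$, which \emph{does} contain $\textbf{3}\times\textbf{3}$ as a sublattice (embed $\textbf{3}$ into $\textbf{2}\times\textbf{2}$ coordinatewise) and has the doubly reducible element $(1,1,0)$, so it violates (W). In other words, your two forbidden grids do not translate into the two poset conditions you list; further configurations (a $2$-chain incomparable to a $2$-antichain, among others) must be excluded, and identifying exactly which $P$ survive is precisely the content of the paper's Propositions \ref{two two two} and \ref{ladder} --- the work has not been done, only relocated. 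Second, and independently, Birkhoff representation by join-irreducibles is a finite-lattice tool: an infinite distributive lattice need not be recoverable from its poset of (completely) join-irreducible elements, so ``dualizing the decomposition back through the representation'' is not available for the general countable $L$ the theorem concerns.

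The paper's route avoids $P$ entirely: a sublattice of a free lattice satisfies (W), hence has no doubly reducible elements; one then decomposes $L$ canonically into linearly indecomposable convex blocks and proves directly that each block is a point, $\textbf{2}\times\textbf{2}\times\textbf{2}$, or $\textbf{2}\times C$ (Propositions \ref{two two two} and \ref{ladder}, proved via a homomorphism from $FD(3)$ and a transfinite ladder construction respectively), with countability of $I$ and of each $C$ supplied by Proposition \ref{powerful}. Your forbidden-sublattice computations for $\textbf{2}^4$ and $\textbf{3}\times\textbf{3}$ are correct and play the same role as the paper's Remark \ref{forbidden}, but they are the easy part. On the sufficiency side your outline matches the standard one, though the embedding of $\textbf{2}\times C$ for a dense countable chain $C$ and the gluing of countably many blocks along an arbitrary countable chain $I$ are each substantial constructions that you would still need to carry out; the paper itself defers these to Galvin and J\'onsson.
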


Theorem 3 implies that all distributive sublattices of free lattices are countable and have a width of at most three. An important ingredient for Theorem \ref{the characterization} is Theorem \ref{the restriction}:

\begin{theorem}[F. Galvin and B. J\'onsson, \cite{DSFL}] \label{the restriction}
	Let $L$ be a distributive lattice. Then $L$ has no doubly reducible elements if and only if $L \cong \bigoplus_{i \in I} L_i$ where $\bigoplus_{i \in I} L_i$ is a linear sum indexed by a chain $I$ and for all $i \in I$: $|L_i| = 1$, $L_i \cong \textbf{2} \times \textbf{2} \times \textbf{2}$, or $L = \textbf{2} \times C$ where $C$ is a chain.
\end{theorem}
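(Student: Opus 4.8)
The plan is to prove the two implications separately, dispatching the ``if'' direction by a short inheritance argument and concentrating on the ``only if'' direction, where the content lies. For sufficiency, I would first check by inspection that each admissible block has no doubly reducible element: in $\textbf{2}\times\textbf{2}\times\textbf{2}$ the four elements of rank $\le 1$ are meet-reducible but join-irreducible while the four of rank $\ge 2$ are join-reducible but meet-irreducible, and in a ladder $\textbf{2}\times C$ the elements $(0,c)$ are at worst only meet-reducible and the $(1,c)$ only join-reducible. Then I would show the property is inherited by linear sums: if $x$ lies in the block $L_i$ and $x=a\vee b$ with $a,b<x$, then an $a$ or $b$ from a lower block lies below all of $L_i$ and is absorbed into the join, while one from a higher block is too large, so necessarily $a,b\in L_i$; thus $x$ is join-reducible in $\bigoplus_{i}L_i$ exactly when it is join-reducible in $L_i$, except that a least element of a block is automatically join-irreducible, and dually for meets and greatest elements. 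Consequently a doubly reducible element of the sum would be a non-extremal doubly reducible element of some block, which cannot occur.

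For the converse I would call $s\in L$ a \emph{separator} if it is comparable to every element of $L$; the separators form a chain. Because $L$ has no doubly reducible element, each separator $s$ is join-irreducible or meet-irreducible. If $s$ is meet-irreducible then $\{x:x>s\}$ is closed under meet, so $L=(\downarrow s)\oplus\{x:x>s\}$ is a genuine linear sum; dually when $s$ is join-irreducible. Performing all such cuts expresses $L$ as a linear sum $\bigoplus_{i\in I}L_i$ over a chain $I$ in which no block has an internal separator. Since an antichain of a linear sum lies inside a single block, it then suffices to classify the \emph{indecomposable} blocks: a distributive lattice $B$ with no doubly reducible element and no separator other than possibly its own bottom and top must be a singleton, a cube, or a ladder.

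This last step is where I expect the difficulty to concentrate. The governing local fact is that in $B$ no element may be at once a join of two smaller and a meet of two larger elements. I would first bound the width of $B$ by $3$: a median/forbidden-configuration analysis shows that an antichain of size $\ge 4$, or a rank-two element sitting inside an over-thickened cube such as in $\textbf{2}\times\textbf{2}\times\textbf{3}$, always manufactures a doubly reducible element. When the width is $2$ I would recover the product structure $\textbf{2}\times C$ by splitting $B$ into its two covering chains and matching levels; in the bounded case this is conveniently packaged as a complemented atom $e$, for which $x\mapsto(x\wedge e,\,x\wedge e')$ is the required isomorphism. When the width is $3$ the constraints turn out to be rigid enough to force exactly the eight elements of $\textbf{2}\times\textbf{2}\times\textbf{2}$. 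The two genuinely hard points are precisely this rigidity in width $3$, namely why no \emph{infinite} width-$3$ block survives even though infinite width-$2$ ladders do, and the fact that $C$ may be dense, so that a ladder can have no covers at all from below; the cover-based intuition must then be recast entirely in terms of the reducibility relations, which remain well behaved because meet-reducibility of $(0,c)$ needs only some $c'>c$ rather than an actual cover. I would close by verifying that the block index set is a chain and that each convex block is a sublattice, yielding the stated linear sum.
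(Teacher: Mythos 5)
Your overall architecture---the backward direction by an inheritance argument for linear sums, the forward direction by splitting $L$ into indecomposable blocks over a chain and classifying each block by width---is the same as the paper's, which reduces Theorem \ref{the restriction} to Propositions \ref{two two two} and \ref{ladder} about \emph{linearly indecomposable} distributive lattices. The backward direction as you present it is fine. But your decomposition step has a genuine error: you cut only at \emph{separators} (elements comparable to everything) and then assert that a block with no internal separator must be a singleton, a cube, or a ladder. That assertion is false, because a lattice can admit a linear decomposition $L = A \oplus B$ in which $A$ has no greatest element and $B$ has no least element, so that no single element witnesses the cut. Concretely, $L = (\textbf{2}\times\mathbb{Q}) \oplus (\textbf{2}\times\mathbb{Q})$ is distributive, has no doubly reducible elements (by your own inheritance argument), has width two, and has no separators whatsoever; yet it is not isomorphic to $\textbf{2}\times C$ for any chain $C$, since it contains incomparable pairs $a \parallel a'$ and $b \parallel b'$ with $a, a'$ both strictly below both of $b, b'$, a configuration impossible in $\textbf{2}\times C$ (there $a \parallel a'$ forces $\{a,a'\}=\{(0,c),(1,c')\}$ with $c'<c$, and $(1,c') < (0,d)$ never holds). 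The classification of blocks must therefore be carried out under the hypothesis of genuine linear indecomposability---no decomposition $L = A \oplus B$ with $A$ a nonempty proper ideal lying entirely below the complementary filter---and the canonical decomposition of $L$ into such components over a chain has to be constructed from that relation, not from separator elements.

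The second issue is one you flag but do not resolve: the classification of the indecomposable blocks is where essentially all of the content of this theorem lives, and it is exactly what the paper spends Sections 1.1 and 1.2 proving. For width at least three the paper works inside the free distributive lattice on three generators to show an antichain of size three generates a cube and that nothing else can be adjoined; for width two it runs a (transfinite) induction adjoining one element at a time to a growing copy of $\textbf{2}\times C$, using gadgets and the $M_3$--$N_5$ theorem. Your width-two sketch (``splitting $B$ into its two covering chains and matching levels,'' or the complemented-atom map in the bounded case) presupposes covers and bounds that, as you yourself note, need not exist when $C$ is dense or unbounded, so the sketch does not survive its own caveat. As written, the proposal establishes the easy direction and the reduction to blocks (modulo the separator error above), but leaves the two hard propositions unproved.
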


Theorem \ref{the restriction} characterizes distributive lattices which satisfy Whitman's Condition. This is because all lattices that satisfy Whitman's Condition have no doubly reducible elements and all lattices described in Theorem \ref{the restriction} satisfy Whitman's Condition. \\

To prove Theorem \ref{the restriction} three propositions are needed: Proposition \ref{powerful}, Proposition \ref{two two two}, and Proposition \ref{ladder}. These three propositions constitute the majority of Galvin and J\'onsson's paper \cite{DSFL}. Proposition \ref{powerful} is quite well-known in literature on free lattices. We refer the reader to \cite{FL}, \cite{LTF}, or to \cite{DSFL} to see how to prove Proposition \ref{powerful}. In the next section, I will write my new proofs to Proposition \ref{two two two} and Proposition \ref{ladder}.

\begin{proposition}[Galvin and J\'onsson] \label{powerful}
	All chains in a free lattice are countable.
\end{proposition}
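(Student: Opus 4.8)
The plan is to prove that any chain in a free lattice $F$ is countable by exploiting the canonical form of lattice terms together with Whitman's solution to the word problem for free lattices. Every element of a free lattice on a set $X$ of generators is represented by a lattice term built from finitely many generators using $\vee$ and $\wedge$, and two terms represent the same element precisely when this is forced by the lattice axioms. Because each individual term mentions only finitely many generators, and there are only countably many finite lattice terms over any fixed countable (or even finite) set of generators, the first observation I would record is that a free lattice on a countable set of generators is itself countable, so the statement is immediate in that case. The real content is therefore for free lattices on uncountably many generators.

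First I would reduce to a single element's support. For an element $a \in F$, let $\mathrm{gen}(a)$ denote the finite set of generators appearing in some (equivalently, by Whitman's theorem, in the canonical) term representing $a$; this is well defined and finite. Given a chain $C \subseteq F$, I would consider $\bigcup_{a \in C} \mathrm{gen}(a)$ and try to show this union is countable, for then $C$ lies inside the sublattice generated by countably many generators, which is a countable free lattice by the first paragraph, forcing $C$ to be countable. The crux is thus to bound how many distinct generators a chain can involve.

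The main obstacle, and the heart of the argument, is controlling the interaction between the order on $F$ and the generator supports. The key tool I would invoke is the characterization of the ordering in free lattices via Whitman's condition (W): $a \wedge b \leq c \vee d$ holds only in the "obvious" cases, and comparability $p \leq q$ between canonical terms is decided by a recursive, finite procedure. Using this I would aim to establish a structural dichotomy: along a strictly ascending or descending chain the canonical forms must change in a controlled way, and one cannot insert uncountably many elements between comparable elements without producing an uncountable antichain-like obstruction forbidden by (W). Concretely, I expect the decisive step to be showing that free lattices contain no uncountable well-ordered or reverse-well-ordered subchains, and then combining this with the fact that any chain embeds into the reals only if it is separable; a chain in which every element has finite support, together with the recursive comparability test, should be shown to be order-embeddable into a countable structure.

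An alternative, and perhaps cleaner, route I would seriously consider is a direct density/splitting argument. Suppose toward a contradiction that $C$ is an uncountable chain; then $C$ contains an uncountable subchain order-isomorphic to an uncountable subset of $\mathbb{R}$, which in turn contains either an uncountable increasing or decreasing well-ordered sequence or admits an uncountable family of pairwise disjoint open intervals. Each such interval $(a_i, b_i)$ with $a_i < b_i$ would let me extract a witness, and by the finiteness of supports the pigeonhole principle forces uncountably many witnesses to share the same finite generator support and the same canonical "shape," contradicting the injectivity of canonical forms over a finite generator set (only countably many canonical terms use a fixed finite set of generators). I expect the hard part to be making the passage from "uncountable chain" to "uncountably many elements of identical bounded support" fully rigorous, since it requires a careful pigeonhole over the countably many possible finite supports and term-shapes; once that reduction is in place, the contradiction with the countability of finitely-supported elements closes the proof.
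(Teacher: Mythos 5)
The paper itself gives no proof of Proposition \ref{powerful}; it explicitly defers to \cite{FL}, \cite{LTF}, and \cite{DSFL}, so your attempt has to be measured against the standard Galvin--J\'onsson argument. Your preliminary reductions are fine (a countably generated free lattice is countable; each element has finite generator support), but the proposal never supplies the one lemma that carries all the weight, and the set-theoretic scaffolding you erect in its place is false. In the ``alternative route'': an uncountable chain need not contain an uncountable subchain embeddable in $\mathbb{R}$ (consider $\omega_1$); an uncountable subset of $\mathbb{R}$ never contains an uncountable well-ordered subsequence (every well-ordered subset of $\mathbb{R}$ is countable); and a separable linear order never admits an uncountable family of pairwise disjoint nonempty open intervals. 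So the dichotomy you propose has no true branch, and the extraction of ``uncountably many witnesses'' collapses. In the ``main route,'' the reduction is insufficient even if carried out: a chain with no uncountable well-ordered or reverse-well-ordered subchains that embeds in $\mathbb{R}$ can still have cardinality $2^{\aleph_0}$ ($\mathbb{R}$ itself), so establishing those two properties would not yield countability. Finally, the pigeonhole step ``uncountably many elements share the same finite generator support'' does not follow from anything you establish: a $\Delta$-system refinement gives an uncountable subchain whose supports share a finite root $R$, but the elements themselves need not lie in the countable sublattice generated by $R$.

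What is missing is precisely the structural input that Galvin and J\'onsson extract from Whitman's condition and canonical forms: an interpolation-type lemma controlling how comparabilities in a free lattice factor through elements built from common variables, combined with an induction on the rank of canonical forms, so that an uncountable chain is forced to interact with a fixed countable sublattice in a way that actually bounds its cardinality (and not merely makes it order-separable). You correctly name the ingredients --- finite supports, canonical forms, condition (W) --- and the places where they must enter, but every sentence in which the decisive argument should appear is of the form ``I would aim to show'' or ``I expect the hard part to be.'' As it stands, the proposal identifies the difficulty of the proposition rather than proving it.
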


\begin{proposition}[Galvin and J\'onsson] \label{two two two}
	If $L$ is a linearly indecomposable distributive lattice with no doubly reducible elements, then $L$ has a width of at most two or $L$ is isomorphic to $\textbf{2} \times \textbf{2} \times \textbf{2}$.
\end{proposition}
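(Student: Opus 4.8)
The plan is to prove the dichotomy directly by assuming that $L$ has width at least three and deducing that $L \cong \textbf{2}\times\textbf{2}\times\textbf{2}$. Since the width is at least three, I would fix a three-element antichain $\{a,b,c\}$ and let $D$ be the sublattice of $L$ that it generates. Because $L$ is distributive, $D$ is a finite distributive lattice, namely a homomorphic image of the free distributive lattice on three generators; and because meets and joins in $D$ agree with those in $L$, any witness to double reducibility in $D$ is also one in $L$, so $D$ inherits the hypothesis of having no doubly reducible element. My first goal is to show $D \cong \textbf{2}\times\textbf{2}\times\textbf{2}$, and my second is to use linear indecomposability to conclude $L = D$.

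For the first goal I would exploit the reducibility forced on the named elements of $D$. Each pairwise join $a\vee b,\ a\vee c,\ b\vee c$ is join reducible, since the generators are an antichain and hence lie strictly below each such join; by hypothesis each is then meet irreducible, and dually each pairwise meet is join irreducible. The decisive elements are the median $\mu = (a\wedge b)\vee(b\wedge c)\vee(c\wedge a) = (a\vee b)\wedge(b\vee c)\wedge(c\vee a)$ and the corner terms $a\wedge(b\vee c) = (a\wedge b)\vee(a\wedge c)$ together with $a\vee(b\wedge c)$ and their cyclic variants, each of which is simultaneously join and meet reducible in the free distributive lattice. The hypothesis forces one representation of each to degenerate; for instance, unless $a\le b\vee c$, the element $a\wedge(b\vee c)$ is meet reducible, hence must be join irreducible, which makes $a\wedge b$ and $a\wedge c$ comparable. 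Carrying out this case analysis across the three symmetric copies, and using that distributivity excludes the diamond $M_3$ as a sublattice, I expect to force $a\wedge b = a\wedge c = b\wedge c$ (or, dually, the corresponding collapse of the three joins), which leaves exactly the eight elements of $\textbf{2}\times\textbf{2}\times\textbf{2}$.

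For the second goal, suppose toward a contradiction that $D \subsetneq L$, and write $B = a\wedge b\wedge c$ and $T = a\vee b\vee c$ for the bottom and top of $D$. I would show that every $z \in L\setminus D$ is untenable: elements lying above $T$ or below $B$ should peel off as linear summands, yielding a nontrivial decomposition $L = X\oplus Y$ and contradicting linear indecomposability, while an element genuinely incomparable to the cube should, via distributivity and the meet/join irreducibility already established, manufacture a doubly reducible element. Ruling these out gives $L = D \cong \textbf{2}\times\textbf{2}\times\textbf{2}$; in particular this also disposes of any a priori width-four antichain, since the fourth element would then be an extra point of the cube.

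The hard part will be the first goal: controlling the homomorphic images of the free distributive lattice on three generators using only the local `no doubly reducible element' hypothesis. The difficulty is that this hypothesis constrains each element individually, whereas I need a single global collapse, so the delicate work is the bookkeeping of which of the many reducible terms degenerates and the verification that these degeneracies are mutually consistent and symmetric enough to pin down the cube rather than some intermediate quotient. By comparison the second goal is routine once the irreducibility data and linear indecomposability are combined, though some care is needed to confirm that no extra element can hide beside the cube without creating a doubly reducible element.
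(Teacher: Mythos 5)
Your overall strategy is the same as the paper's: fix a three-element antichain, use the fact that its generated sublattice is a quotient of the free distributive lattice on three generators, force collapses of the doubly reducible terms of $FD(3)$ (the median and the corner elements $(a\wedge b)\vee(a\wedge c)=a\wedge(b\vee c)$, etc.) until the cube $\textbf{2}\times\textbf{2}\times\textbf{2}$ emerges, and then invoke linear indecomposability to conclude $L$ is that cube. Your first goal, which you flag as the hard part, is exactly the paper's proof of Lemma~1 of Galvin--J\'onsson, and your local deductions (a proper meet must be join irreducible, hence $a\wedge b$ and $a\wedge c$ comparable, etc.) are the right moves; the paper organizes the same bookkeeping by passing to explicit quotients $FD(\{a,b,c\})/\!\equiv$ and observing that the element $y=b\vee(a\wedge c)=(a\vee b)\wedge(a\vee c)$ must collapse onto $b$, which is a clean way to see that the degeneracies are mutually consistent and land on the cube rather than an intermediate quotient.

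The genuine gap is in what you call the ``routine'' second goal. To show $L=D$ you must exclude every element $s$ sitting beside the cube, and this splits into two cases that each need a specific mechanism you have not supplied. First, if $s$ is incomparable to all of $a,b,c$, you have a four-element antichain; your remark that this is ``disposed of'' because the fourth element would be an extra point of the cube presupposes $L=D$, which is what you are trying to prove, so the order of argument is backwards. The actual mechanism is to apply Lemma~1 to the sub-triples of $\{a,b,c,s\}$: since the cube has only two three-element antichains (all pairwise meets equal, or all pairwise joins equal), the four elements are forced to generate either $\textbf{2}\times\textbf{2}\times\textbf{2}\times\textbf{2}$ or $\textbf{2}\times\textbf{2}\times\textbf{3}$, and each of these contains a doubly reducible element. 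Second, if $s$ is comparable to some generator but neither below the bottom nor above the top of the cube, the paper reduces (using modularity and the no-doubly-reducible hypothesis) to the normal form $a\wedge b\wedge c<s<a$, checks that $\{s,b,c\}$ and $\{s,s\vee b,s\vee c\}$ are antichains, and again lands on a forbidden sublattice containing a doubly reducible element. Your placeholder that such an $s$ ``should manufacture a doubly reducible element'' is the right destination, but the forbidden-sublattice argument is the missing idea that actually gets you there, and it is a named lemma's worth of work rather than a routine consequence of what precedes it.
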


\begin{proposition}[Galvin and J\'onsson] \label{ladder}
	All width two distributive lattices that are linearly indecomposable and have no doubly reducible elements are isomorphic to $\textbf{2} \times C$ where $C$ is a chain.
\end{proposition}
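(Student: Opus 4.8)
The plan is to first read off the local ``ladder'' structure from the covering relation and then glue it into a global product $\textbf{2} \times C$. First I would record what the hypotheses force on covers. Since $L$ has width two, no element can have three upper covers or three lower covers, for three upper (or lower) covers of a fixed element are pairwise incomparable and hence form a three-element antichain; so every element has at most two upper covers and at most two lower covers. Distributivity sharpens this: if a join-irreducible element $x$ had two distinct lower covers $a$ and $b$, then $a < a \vee b \le x$ with $a \prec x$ would force $a \vee b = x$, making $x$ join-reducible; so a join-irreducible element has at most one lower cover, and dually a meet-irreducible element has at most one upper cover. Combining this with the hypothesis that $L$ has no doubly reducible element, so that every element is join-irreducible or meet-irreducible, every element has at most one lower cover or at most one upper cover.

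Next I would use linear indecomposability to eliminate ``interior cut points.'' Suppose $x$ is comparable to every element of $L$ but is neither the greatest nor the least element. By the previous step $x$ is join-irreducible or meet-irreducible. If $x$ is join-irreducible, then $\{y : y < x\}$ is closed under join (a join of two elements below $x$ cannot equal $x$, by join-irreducibility) and under meet, hence is a lattice, and $L = \{y : y < x\} \oplus {\uparrow} x$ is a nontrivial linear sum; dually if $x$ is meet-irreducible. Either way $L$ would be linearly decomposable, a contradiction. Thus the only elements comparable to all of $L$ are a possible top and bottom, and away from these extremes every ``level'' of $L$ is genuinely two wide; since the width is exactly two, at least one such two-element level exists, so the $\textbf{2}$ factor will be genuine.

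Then I would construct the chain $C$ and the isomorphism. The target $\textbf{2} \times C$ carries two congruences: the projection onto $C$, whose blocks are the ``rungs'' $\{(0,c),(1,c)\}$, and the projection onto $\textbf{2}$, whose two blocks are the ``rails'' $\{0\} \times C$ and $\{1\} \times C$. Accordingly I would pair the two elements of each interior level into a rung, intrinsically the join-irreducible member and the meet-irreducible member of a covering pair sitting inside a copy of $\textbf{2} \times \textbf{2}$, and define $\theta$ to be the equivalence whose blocks are these rungs and $\phi$ the equivalence separating the lower rail of join-irreducible representatives from the upper rail. The goal is to verify that $\theta$ and $\phi$ are lattice congruences, that $L/\theta$ is a chain $C$ while $L/\phi \cong \textbf{2}$, and that $\theta \cap \phi$ is the identity relation while $\theta$ and $\phi$ jointly generate the full relation; the subdirect decomposition then yields $L \cong \textbf{2} \times C$, with the extreme levels accounting for a possible top or bottom of $C$.

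The hard part will be the infinite case. When $C$ has densely ordered or endpoint-free parts, elements of $L$ need not have covers at all, so the rungs and rails cannot simply be read off a Hasse diagram; I must instead define $\theta$ and $\phi$ purely in terms of joins, meets, and comparability, and prove they are congruences with the required quotients without invoking covers. Checking that these two congruences are complementary, that collapsing the rungs and collapsing the rails recovers $L$ on one side and the identity on the other, is exactly where distributivity must be combined with the width-two and no-doubly-reducible hypotheses, and I expect essentially all of the difficulty of the proposition to be concentrated in this gluing step.
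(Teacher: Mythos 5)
Your opening observations about covers and cut points are fine as far as they go, but the proposal stops exactly where the proof has to begin. You define the rungs and rails only in terms of covering pairs, then correctly note that in the dense or endpoint-free case elements of $L$ need not have covers, so that definition is unavailable; yet you never supply the promised cover-free definitions of $\theta$ and $\phi$, nor any argument that they are congruences, that $L/\theta$ is a chain, or that $L/\phi \cong \textbf{2}$. Since you yourself locate ``essentially all of the difficulty of the proposition'' in this unexecuted gluing step, what you have is a plan rather than a proof. For contrast, the paper never puts congruences on $L$: it uses the $M_3$--$N_5$ theorem to show that every gadget $G_D(p;q,r)$ in a width-two distributive lattice is a copy of $\textbf{2} \times \textbf{3}$, well-orders the elements of $D$, and grows a nested chain of sublattices isomorphic to $\textbf{2} \times \textbf{3}, \textbf{2} \times \textbf{4}, \dots$ by an explicit case analysis of where each new non-bounding element can attach to the current copy of $\textbf{2} \times C_\zeta$, finishing by (transfinite) induction. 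That case analysis is precisely the content your sketch postpones.

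There is also a technical error in the decomposition criterion you state. Two congruences with $\theta \cap \phi$ the identity and $\theta \vee \phi$ the full relation give only a subdirect embedding $L \hookrightarrow (L/\theta) \times (L/\phi)$, not an isomorphism onto the product: lattice congruences need not permute, and the three-element chain $\textbf{3}$ with its two nontrivial congruences (collapse the lower covering pair, collapse the upper one) satisfies both conditions yet is not isomorphic to $\textbf{2} \times \textbf{2}$. To conclude $L \cong \textbf{2} \times C$ you would need the stronger condition $\theta \circ \phi = \nabla$, or else a direct verification that every $\theta$-class meets both $\phi$-classes --- which is again exactly the structural claim (every rung has an element on each rail) that the missing argument was supposed to establish.
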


\subsection{Distributive Sublattices Part 1}

We analyse distributive sublattices of free lattices which have a width of at least three. Specifically, Proposition \ref{two two two} from F. Galvin and B. J\'onsson is proven in a new way. F. Galvin and B. J\'onsson's original proof involved intricate calculations with carefully selected lattice terms (see \cite{DSFL}). Although the new proof in this subsection has similaritie s to F. Galvin and B. J\'onsson's arguments, it uses a lattice, the \emph{free distributive lattice on 3 generators}, to capture all possible cases that may be encountered.\\

The following more general concept (see \cite{FL} and \cite{VL}) is useful for describing free distributive lattices:

\begin{definition}(Relatively Free Lattices) \label{relatively free}
	Let $\mathcal{V}$ be a variety of lattices and $X$ be a set. Then the relatively free lattice in $\mathcal{V}$ over $X$ is the lattice unique up to lattice isomorphism, $F_\mathcal{V}(X)$, such that for any lattice $L$ in the variety $\mathcal{V}$ and any mapping, $f : X \to L$, there is a lattice homomorphism, $h : F_\mathcal{V}(X) \to L$, such that, for all $x \in X$, $h(x) = f(x)$.
\end{definition}

A \emph{free distributive lattice} on a set $X$, written as $FD(X)$, is the relatively free lattice $F_{\mathcal{D}}(X)$ where $\mathcal{D}$ is the variety of distributive lattices. If $\mathcal{V}$ is a variety of lattices, we sometimes denote $F_\mathcal{V}(X)$ by $F_\mathcal{V}(|X|)$. The elements of $X$ are called the \emph{generators} of $F_\mathcal{V}(X)$.  We occasionally denote $FD(X)$ by $FD(|X|)$.\\

My proof of Proposition \ref{two two two} is as follows.\\

\begin{proof} (Brian T. Chan) Let $D$ be a distributive lattice with no doubly reducible elements. Instead of manipulating terms directly in $D$, as done in \cite{DSFL}, we look at the \emph{free distributive lattice on 3 generators} in Figure 3.1 which we denote by $FD(\{a,b,c\})$. It can be shown (see \cite{ILO}) that for any positive integer $n$, the elements of $FD(n)$ can be identified with the nonempty antichains of $\mathcal{P}(\mathcal{P}(S) \backslash \{\varnothing\})$ where $\mathcal{P}$ is the power set operation and $|S| = n$. \\

We first prove Lemma 1 of \cite{DSFL}: \emph{If $\{p,q,r\} \subseteq D$ is an antichain, then the sublattice generated by $\{p,q,r\}$ is isomorphic to $\textbf{2} \times \textbf{2} \times \textbf{2}$.} \\

As $D$ is distributive, there is a lattice homomorphism $f : FD(\{a,b,c\}) \rightarrow D$ induced by the mapping: $a \mapsto p$, $b \mapsto q$, and $c \mapsto r$. Now let $\equiv$ be the congruence on $FD(\{a,b,c\}$ such that $s \equiv t$ if and only if $f(s) = f(t)$. Consider the element $z = (a \vee b) \wedge (b \vee c) \wedge (c \vee a) = (a \wedge b) \vee (b \wedge c) \vee (c \wedge a)$ in $FD(\{a,b,c\})$. \\

In \cite{DSFL}, the element $f(z)$ was analysed in $D$. Moreover, the authors assumed without loss of generality that $f(z)$ is join irreducible. We use this assumption for $FD(\{a,b,c\})$ by assuming that $z$ is join irreducible. For this to occur we assume without loss of generality that $z \equiv (a \wedge b) \vee (a \wedge c)$ and $z \equiv (a \wedge c) \vee (b \wedge c)$. \\

Since congruences are \emph{compatible with join and meet}, we can more easily calculate quotient lattices of a finite lattice. We recall the notion of a \emph{congruence lattice} (see \cite{LTF}): In Figure 3.2, the leftmost lattice is the quotient lattice $FD(\{a,b,c\})/\equiv_1$ where $\equiv_1$ is the congruence $con(x, (a \wedge b) \vee (a \wedge c)) \in Con(FD(\{a,b,c\}))$ and the middle lattice is the quotient lattice $FD(\{a,b,c\})/\equiv_2$ where $\equiv_2$ is the congruence \\

$con(x, (a \wedge b) \vee (a \wedge c))$ $\vee$ $con(x, (a \wedge c) \vee (b \wedge c)) \in Con(FD(\{a,b,c\}))$. \\

Referring to Figure 3.2, consider $y = b \vee (a \wedge c) = (a \vee b) \wedge (a \vee c)$ in $FD(\{a,b,c\})/\equiv$. Another parallel between this proof and Galvin and J\'onsson's original proof is that in \cite{DSFL}, the element $f(y) \in D$ was analysed. Since $\{a,b,c\}$ is an antichain in $FD(\{a,b,c\})$ and in $D$, $\{a,b,c\}$ is an antichain in $FD(\{a,b,c\})/\equiv_2$. Moreover, $f(y)$ cannot be doubly reducible in $D$ and lattice congruences are compatible with join and meet. Hence, $y \equiv b$ and the resulting quotient lattice, $FD(\{a,b,c\})/\equiv$, is obtained thus proving Lemma 1 of \cite{DSFL}. \\

\begin{remark} \label{pool}
In any distributive lattice $H$ if $\{p_1,p_2, \dots, p_n\} \subseteq H$ is an antichain such that for some $z \in H$, $z = p_i \wedge p_j$ for all $i \neq j$ then it can be verified using the distributive laws that the sublattice of $H$ generated by $\{p_1,p_2, \dots, p_n\}$ is isomorphic to the $2^n$ element boolean algebra (see \cite{DSFL}).
\end{remark}

\begin{remark} \label{forbidden}
Since $D$ has no doubly reducible elements, we note that $D$ cannot have a sublattice isomorphic to $\textbf{2} \times \textbf{2} \times \textbf{2} \times \textbf{2}$ or to $\textbf{2} \times \textbf{2} \times \textbf{3}$ because those two lattices have doubly reducible elements.
\end{remark}

To see that the width of $D$ is three, suppose that $\{p,q,r,s\} \subseteq D$ is an antichain. We use Lemma 1 of \cite{DSFL}. The lattice $\textbf{2} \times \textbf{2} \times \textbf{2}$ has two antichains of cardinality three: $\{(1,0,0),(0,1,0),(0,0,1) \}$ and $\{(0,1,1), (1,0,1), (1,1,0) \}$. So assume without loss of generality that $p \wedge q = q \wedge r = r \wedge p$. If $q \wedge r = r \wedge s = s \wedge q$ then by Remark \ref{pool}, $\{p,q,r,s\}$ generates a sublattice of $D$ isomorphic to the boolean lattice $\textbf{2} \times \textbf{2} \times \textbf{2} \times \textbf{2}$. Moreover, if $q \vee r = r \vee s = s \vee q$ then as the sublattice of $D$ generated by $\{q,r,s\}$ is isomorphic to $\textbf{2} \times \textbf{2} \times \textbf{2}$, it can be checked that $\{p,q,r,s\}$ generates a sublattice of $D$ isomorphic to $\textbf{2} \times \textbf{2} \times \textbf{3}$. But that is impossible by Remark \ref{forbidden}. Hence, the width of $D$ must be three. \\

Since the width of $D$ is three, let $\{p,q,r\} \subseteq D$ be an antichain and assume without loss of generality that for some element $z \in D$, $z = p \wedge q = q \wedge r = r \wedge p$. Because $D$ is linearly indecomposable, it is enough to show by contradiction that if $s \in D$ is not in the sublattice generated by $\{p,q,r\}$ then $s \leq z = p \wedge q \wedge r$ or $s \geq p \vee q \vee r$. \\

So suppose that the above claim in not true. By duality it is enough to consider when $s$ is comparable to exactly one element of $\{p,q,r\}$ and less than or equal to at least one element of $\{p \vee q, q \vee r, r \vee p \}$. So without loss of generality we can assume that $z < s < p$ for the following reasons: \\

(1) \emph{If $s > p$ we can use the fact that $D$ is modular and consider the element $s \wedge p$. Similar reasoning applies to when $s > q$ or $s > r$.}\\

(2) \emph{If $s < p$ we can use the fact that $D$ has no doubly reducible elements and consider the elements $s \vee z$ and $p = (p \vee q) \wedge (p \vee r)$. Similar reasoning applies to when $s < q$ or $s < r$.}\\

We simplify an argument from \cite{DSFL} as follows. Because $z < s < p$ and $p \wedge q = p \wedge r = z$, $s \parallel q$ and $s \parallel r$; so $\{s,q,r\}$ is an antichain. Moreover it can be checked, using the fact that $D$ is modular, that $\{s, s \vee q, s \vee r\}$ is also an antichain. Hence, it can be seen that by Lemma 1 of \cite{DSFL}, the sublattice of $D$ generated by $\{p,q,r,s\}$ is isomorphic to $\textbf{2} \times \textbf{3}$. But that is impossible by Remark \ref{forbidden}. This completes the proof.

	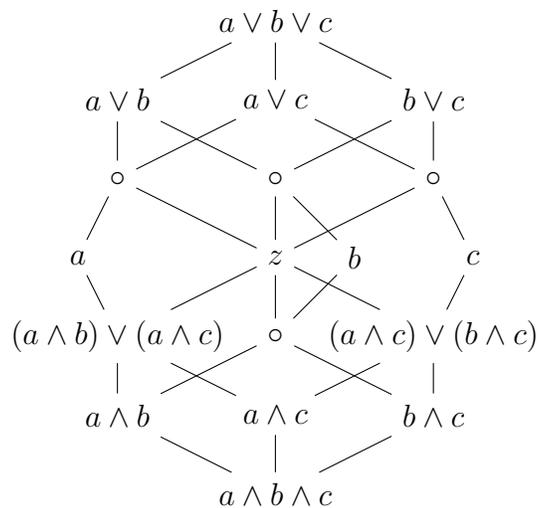
\begin{figure}
		
		\begin{center}
			\begin{tikzpicture}[scale = 2.1]
			
			\node (1) at (0,0) {$a \vee b \vee c$};
			
			\node (a+b) at (-1,-0.5) {$a \vee b$};
			\node (a+c) at (0,-0.5) {$a \vee c$};
			\node (b+c) at (1,-0.5) {$b \vee c$};
			
			\node (a+b-) at (-1,-1) {$\circ$};
			\node (a+c-) at (0,-1) {$\circ$};
			\node (b+c-) at (1,-1) {$\circ$};
			
			\node (z) at (0,-1.5) {$z$};
			
			\node (ab+) at (-1,-2) {$(a \wedge b) \vee (a \wedge c)$};
			\node (ac+) at (0,-2) {$\circ$};
			\node (bc+) at (1,-2) {$(a \wedge c) \vee (b \wedge c)$};
			
			\node (ab) at (-1,-2.5) {$a \wedge b$};
			\node (ac) at (0,-2.5) {$a \wedge c$};
			\node (bc) at (1,-2.5) {$b \wedge c$};
			
			\node (0) at (0,-3) {$a \wedge b \wedge c$};
			
			\node (a) at (-1.25,-1.5) {$a$};
			\node (b) at (0.5,-1.5) {$b$};
			\node (c) at (1.25,-1.5) {$c$};
			
			\draw (1) -- (a+b);
			\draw (1) -- (a+c);
			\draw (1) -- (b+c);
			
			\draw (z) -- (a+b-);
			\draw (z) -- (a+c-);
			\draw (z) -- (b+c-);
			
			\draw (z) -- (ab+);
			\draw (z) -- (ac+);
			\draw (z) -- (bc+);
			
			\draw (0) -- (ab);
			\draw (0) -- (ac);
			\draw (0) -- (bc);
			
			\draw (a+b) -- (a+b-);
			\draw (b+c) -- (b+c-);
			\draw (a+b) -- (a+c-);
			\draw (a+b-) -- (a+c);
			\draw (a+c-) -- (b+c);
			\draw (a+c) -- (b+c-);
			
			\draw (ab) -- (ab+);
			\draw (bc) -- (bc+);
			\draw (ab) -- (ac+);
			\draw (ab+) -- (ac);
			\draw (ac+) -- (bc);
			\draw (ac) -- (bc+);
			
			\draw (ab+) -- (a) -- (a+b-);
			\draw (ac+) -- (b) -- (a+c-);
			\draw (bc+) -- (c) -- (b+c-);
			
			\end{tikzpicture}
		\end{center}
		\caption{The lattice $FD(\{a,b,c\})$}
	\end{figure}
	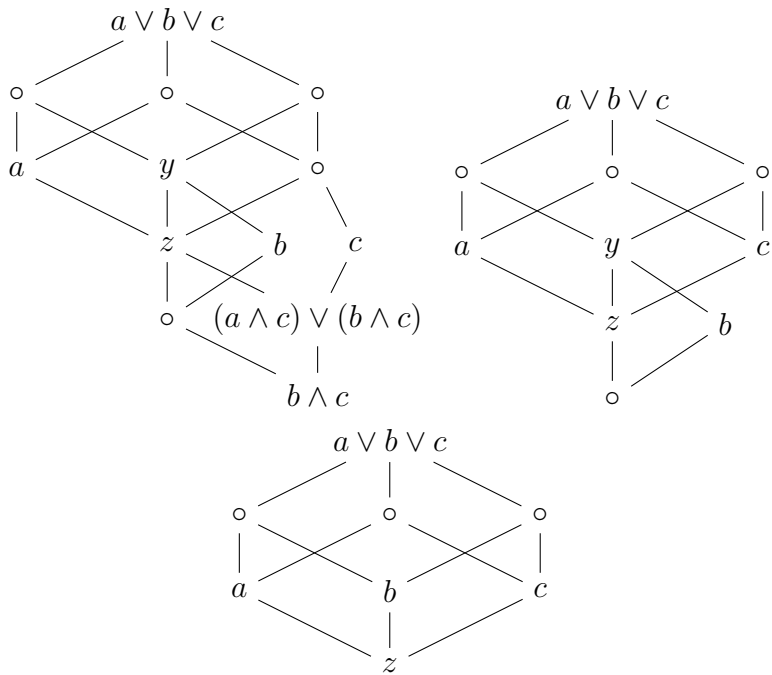
\begin{figure}
		
		\begin{center}
			\begin{tikzpicture}[scale = 2]
			
			\node (1) at (0,0) {$a \vee b \vee c$};
			
			\node (a+b) at (-1,-0.5) {$\circ$};
			\node (a+c) at (0,-0.5) {$\circ$};
			\node (b+c) at (1,-0.5) {$\circ$};
			
			\node (a+b-) at (-1,-1) {$a$};
			\node (a+c-) at (0,-1) {$y$};
			\node (b+c-) at (1,-1) {$\circ$};
			
			\node (z) at (0,-1.5) {$z$};
			
			\node (ac+) at (0,-2) {$\circ$};
			\node (bc+) at (1,-2) {$(a \wedge c) \vee (b \wedge c)$};
			
			\node (bc) at (1,-2.5) {$b \wedge c$};
			
			\node (b) at (0.75,-1.5) {$b$};
			\node (c) at (1.25,-1.5) {$c$};
			
			\draw (1) -- (a+b);
			\draw (1) -- (a+c);
			\draw (1) -- (b+c);
			
			\draw (z) -- (a+b-);
			\draw (z) -- (a+c-);
			\draw (z) -- (b+c-);
			
			\draw (z) -- (ac+);
			\draw (z) -- (bc+);
			
			\draw (a+b) -- (a+b-);
			\draw (b+c) -- (b+c-);
			\draw (a+b) -- (a+c-);
			\draw (a+b-) -- (a+c);
			\draw (a+c-) -- (b+c);
			\draw (a+c) -- (b+c-);
			
			\draw (bc) -- (bc+);
			\draw (ac+) -- (bc);
			
			\draw (ac+) -- (b) -- (a+c-);
			\draw (bc+) -- (c) -- (b+c-);
			
			\end{tikzpicture}
			\begin{tikzpicture}[scale = 2]
			
			\node (1) at (0,0) {$a \vee b \vee c$};
			
			\node (a+b) at (-1,-0.5) {$\circ$};
			\node (a+c) at (0,-0.5) {$\circ$};
			\node (b+c) at (1,-0.5) {$\circ$};
			
			\node (a+b-) at (-1,-1) {$a$};
			\node (a+c-) at (0,-1) {$y$};
			\node (b+c-) at (1,-1) {$c$};
			
			\node (z) at (0,-1.5) {$z$};
			
			\node (ac+) at (0,-2) {$\circ$};
			
			\node (b) at (0.75,-1.5) {$b$};
			
			\draw (1) -- (a+b);
			\draw (1) -- (a+c);
			\draw (1) -- (b+c);
			
			\draw (z) -- (a+b-);
			\draw (z) -- (a+c-);
			\draw (z) -- (b+c-);
			
			\draw (z) -- (ac+);
			
			\draw (a+b) -- (a+b-);
			\draw (b+c) -- (b+c-);
			\draw (a+b) -- (a+c-);
			\draw (a+b-) -- (a+c);
			\draw (a+c-) -- (b+c);
			\draw (a+c) -- (b+c-);
			
			\draw (ac+) -- (b) -- (a+c-);
			
			\end{tikzpicture}
			\begin{tikzpicture}[scale = 2]
			
			\node (1) at (0,0) {$a \vee b \vee c$};
			
			\node (a+b) at (-1,-0.5) {$\circ$};
			\node (a+c) at (0,-0.5) {$\circ$};
			\node (b+c) at (1,-0.5) {$\circ$};
			
			\node (a+b-) at (-1,-1) {$a$};
			\node (a+c-) at (0,-1) {$b$};
			\node (b+c-) at (1,-1) {$c$};
			
			\node (z) at (0,-1.5) {$z$};
			
			\draw (1) -- (a+b);
			\draw (1) -- (a+c);
			\draw (1) -- (b+c);
			
			\draw (z) -- (a+b-);
			\draw (z) -- (a+c-);
			\draw (z) -- (b+c-);
			
			\draw (a+b) -- (a+b-);
			\draw (b+c) -- (b+c-);
			\draw (a+b) -- (a+c-);
			\draw (a+b-) -- (a+c);
			\draw (a+c-) -- (b+c);
			\draw (a+c) -- (b+c-);
			
			\end{tikzpicture}
		\end{center}
		\caption{Three quotient lattices of $FD(\{a,b,c\})$}
	\end{figure}
	
\end{proof}

\subsection{Distributive Sublattices Part 2}

We analyse distributive sublattices of free lattices which have a width of two. Specifically, Proposition \ref{ladder} from F. Galvin and B. J\'onsson is proven in a new way. The new proof has some similarities to, but is more general than, W. Poguntke and I. Rival's arguments in \cite{OLE}.\\

In this paper, we will frequently consider the lattice $\textbf{2} \times \mathbb{Z}$ where $\mathbb{Z}$ is the partially ordered set: $\dots < -2 < -1 < 0 < 1 < 2 < \dots$. Since $\textbf{2}$ is the partially ordered set $0 < 1$, we write $\textbf{2} \times \mathbb{Z} = \{(j,k) : j = 0,1, k \in \mathbb{Z} \}$.\\

For convenience, I will define the term \emph{gadget}. Let $L$ be a lattice and let $p,q,r \in L$ be such that the following two properties hold: (1) $q < r$, $p \parallel q$, and $p \parallel r$; (2) $p \wedge q = p \wedge r$ or $p \vee q = p \vee r$. Then we denote the sublattice of $L$ generated by $\{p,q,r\}$ by $G_L(p;q,r)$ and call such a sublattice a \emph{gadget}. It turns out that a gadget can be isomorphic to four possible lattices, one of which being $\textbf{2} \times \textbf{3}$. To see this, we introduce the following concept.

\begin{definition}(\cite{FL}) \label{freely generated}
	Let $\langle P ; \leq \rangle$ be a partially ordered set. Then the lattice freely generated by $\langle P ; \leq \rangle$, $FL(\langle P ; \leq \rangle)$, is the lattice unique up to lattice isomorphism such that for any lattice $L$ and order preserving map $f : X \to L$, there is a lattice homomorphism $h : FL(\langle P ; \leq \rangle) \to L$ such that, for all $x \in X$, $h(x) = f(x)$.
\end{definition}

Given two finite chains $\textbf{m}$ and $\textbf{n}$, we use notation from \cite{FL} and write $\textbf{m} + \textbf{n}$ to denote the disjoint union of $\textbf{m}$ and $\textbf{n}$ whose partial order is the union of the partial order on $\textbf{m}$ and the partial order on $\textbf{n}$. \\

Consider the lattice $FL(\textbf{1} + \textbf{2})$, depicted in Figure 3, where the elements of $\textbf{1} + \textbf{2}$ are denoted $\{a,b,c\}$ and satisfy: $c \leq c$ and $b < c$. There is a unique lattice homomorphism $\phi: FL(P) \twoheadrightarrow G_L(p;q,r)$ such that $\phi(a) = p$, $\phi(b) = q$, and $\phi(c) = r$. Hence, $G_L(p;q,r)$ is isomorphic to a quotient lattice of $FL(\langle P ; \leq \rangle)$. The gadget $G_L(p;q,r)$ can be identified with the three lattices depicted in Figure 4 and their duals. This can be confirmed using the fact that all congruences on $FL(\langle P ; \leq \rangle)$ are compatible with meet and join.\\

\begin{figure} \label{gadget}
	\begin{center}
		\begin{tikzpicture}[scale = 1.25]
		\node (A+C) at (0,3) {$a \vee c$};
		\node (A+B) at (-1,2) {$a \vee b$};
		\node (C) at (1,2) {$c$};
		\node (A+B C) at (0,1) {$(a \vee b) \wedge c$};
		\node (A) at (-2.5, 0.5) {$a$};
		\node (AC+B) at (0,0) {$(a \wedge c) \vee b$};
		\node (AC) at (-1,-1) {$a \wedge c$};
		\node (B) at (1,-1) {$b$};
		\node (AB) at (0,-2) {$a \wedge b$};
		
		\draw (A+B) -- (A+B C);
		\draw (A+C) -- (A+B) -- (A) -- (AC) -- (AC+B) -- (A+B C) -- (C) -- (A+C);
		\draw (AC) -- (AB) -- (B) -- (AC+B);
		\end{tikzpicture}
	\end{center}
	
	\caption{The lattice $FL(\textbf{1} + \textbf{2})$.}
	
\end{figure}
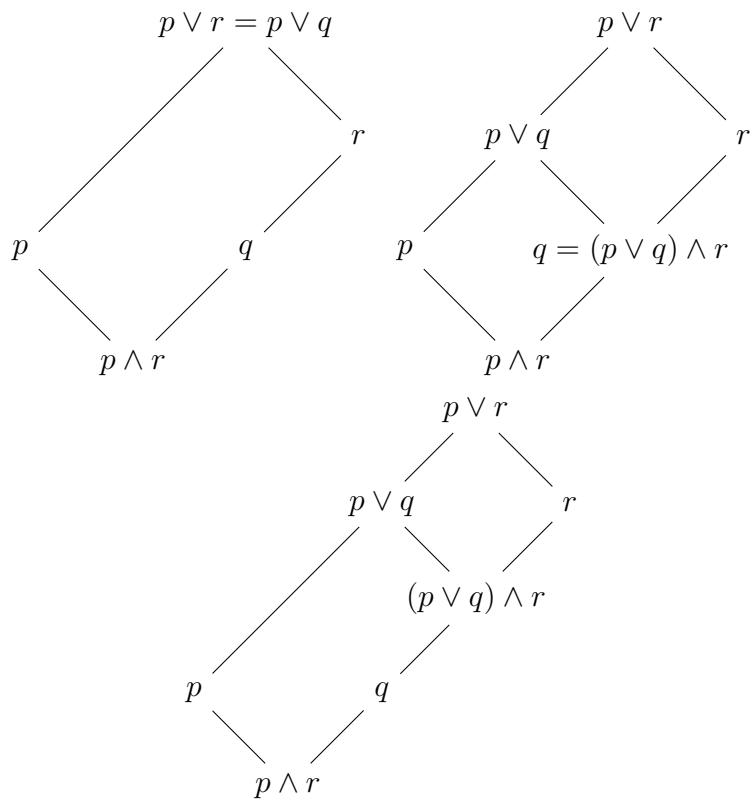
\begin{figure} \label{ThreeCases}
	
	\begin{center}
		\begin{tikzpicture}[scale = 1.5]
		
		\node (a+b) at (0,0) {$p \vee r = p \vee q$};
		\node (c) at (1,-1) {$r$};
		\node (b) at (0,-2) {$q$};
		\node (a) at (-2,-2) {$p$};
		\node (ac) at (-1,-3) {$p \wedge r$};
		
		\draw (a+b) -- (a);
		\draw (ac) -- (b) -- (c);
		\draw (a) -- (ac);
		\draw (a+b) -- (c);
		
		\end{tikzpicture}
		\begin{tikzpicture}[scale=1.5]
		
		\node (a+c) at (0,0) {$p \vee r$};
		\node (a+b) at (-1,-1) {$p \vee q$};
		\node (c) at (1,-1) {$r$};
		\node (b) at (0,-2) {$q = (p \vee q) \wedge r$};
		\node (a) at (-2,-2) {$p$};
		\node (ac) at (-1,-3) {$p \wedge r$};
		
		\draw (a+c) -- (a+b) -- (a);
		\draw (ac) -- (b) -- (c);
		\draw (a) -- (ac);
		\draw (a+b) -- (b);
		\draw (a+c) -- (c);
		
		\end{tikzpicture}
		\begin{tikzpicture}[scale = 1.25]
		
		\node (a+c) at (0,3) {$p \vee r$};
		\node (a+b) at (-1,2) {$p \vee q$};
		\node (c) at (1,2) {$r$};
		\node (a+b c) at (0,1) {$(p \vee q) \wedge r$};
		\node (a) at (-3, 0) {$p$};
		\node (b) at (-1,0) {$q$};
		\node (ac) at (-2,-1) {$p \wedge r$};
		
		\draw (a+b) -- (a+b c);
		\draw (a+c) -- (a+b) -- (a) -- (ac) -- (b) -- (a+b c) -- (c) -- (a+c);
		\end{tikzpicture}
	\end{center}
	
	\caption{Three cases.}
	
\end{figure}

The new proof for Proposition \ref{ladder} uses a well-known theorem in lattice theory known as the $M_3-N_5$ theorem (see \cite{ILO}). Recall the lattices $M_3$, the diamond, and $N_5$, the pentagon (see \cite{ILO}). The $M_3-N_5$ theorem implies that a distributive lattices cannot have a sublattice isomorphic to $N_5$; we will use this property.\\

My proof of Proposition \ref{ladder} is as follows. The last part of the proof can be simplified if only countable lattices were considered; the Axiom of Choice is used for uncountable lattices.\\

\begin{proof} (Brian T. Chan) Let $D$ be a linearly indecomposable distributive lattice of width two that has no doubly reducible elements. It is easy to see that if $|D| \leq 4$, then $D \cong \textbf{2} \times \textbf{2}$. So assume that $|D| \geq 5$. Then $D$ contains a \emph{proper} sublattice isomorphic to $\textbf{2} \times \textbf{2}$. Because $D$ is linearly indecomposable and has no doubly reducible elements it is not hard to see the following: (1) $|D| > 4$ implies that $D$ has at least one gadget; (2) For all \emph{proper} sublattices $H$ of $D$ isomorphic to $\textbf{2} \times C$ for some chain $C$, there is an element $z_H \in D \backslash H$ such that $z_H$ is not an upper bound or a lower bound of $H$.\\

In ZFC set theory, the Axiom of Choice is equivalent to the Well-Ordering Theorem. The Well-Ordering Theorem implies that there is a least ordinal $\alpha$ such that the set of elements of $D$ can be written as $\{p_j : j \in \alpha \}$. In particular, $\alpha = \omega$ if $D$ is countably infinite. \\

Let $G_D(p;q,r)$ be a gadget of $D$ for some $p,q,r \in D$. Because $D$ is modular, the $M_3 - N_5$ theorem indicates that $G_D(p;q,r)$ is the lattice in Figure 3.4 isomorphic to $\textbf{2} \times \textbf{3}$ or the dual of that lattice. So we identify $G_D(p;q,r)$ with $\langle \{s,t,u,x,y,z\} ; \leq \rangle$ where the partial order $\leq$ is as depicted in Figure 3.5. \\

Let $J \subseteq \alpha$ be the \emph{non empty} set of indices $j$ such that $p_j \notin G_D(p;q,r)$ and $p_j$ is neither an upper bound nor a lower bound of $G_D(p;q,r)$. Setting $i$ to be the least element of $J$ in $\alpha$, let $w = p_i \in L \backslash G_D(p;q,r)$. Using Figure 3.5 as a reference, the sublattice generated by $\{p,q,r,w\}$ will be shown to be isomorphic to $\textbf{2} \times \textbf{4}$. Since the width of $D$ is two, it is enough to consider the following cases: \\

\begin{figure} \label{three gadgets}

\begin{center}
\begin{tikzpicture}[scale = 1.25]

\node (u) at (0,0) {$u$};
\node (t) at (-1,-1) {$t$};
\node (z) at (1,-1) {$z$};
\node (y) at (0,-2) {$y$};
\node (s) at (-2,-2) {$s$};
\node (x) at (-1,-3) {$x$};

\draw (u) -- (t) -- (s);
\draw (x) -- (y) -- (z);
\draw (s) -- (x);
\draw (t) -- (y);
\draw (u) -- (z);

\end{tikzpicture}
\end{center}

\caption{A gadget in a distributive lattice.}

\end{figure}
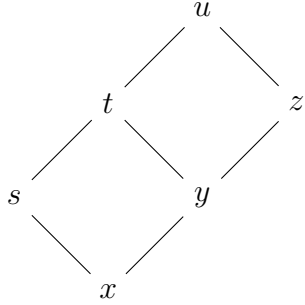

Case 1 : If $u \wedge w = z$ or $x \vee w = s$, then the sublattice generated by $\{p,q,r,w\}$ can be seen to be isomorphic to $\textbf{2} \times \textbf{4}$.\\

Case 2 : Assume that $s < w < t$. Then $\{y,s,q\}$ generates a gadget and as $D$ is modular, $G_D(y;s,w) \cong \textbf{2} \times \textbf{3}$. So it follows that the sublattice of $D$ generated by $\{p,q,r,w\}$ is isomorphic to $\textbf{2} \times \textbf{4}$. The other three cases $t < w < u$, $x < w < y$, and $y < w < z$ follow by symmetry.\\

Case 3 : Assume that $w > w \wedge u \geq t$. Since $t$ is not doubly reducible, $w \wedge u > t$. So $G_D(z;t,w \wedge u)$ is a gadget and by the argument from Case 2, $w \wedge u$ is join reducible. But then, $w \wedge u$ is doubly reducible which is impossible. A dual argument applies to when $w < w \vee x \leq y$.\\

Case 4 : Suppose that $z < w < u$. We see that $t \parallel w$, so consider the gadget $G_D(t;z,w)$. Since $D$ is modular, $G_D(t;z,w) \cong 2 \times 3$. Moreover, because $t \vee w = t \vee z = u$, it follows that $y < t \wedge w < t$. But then $\{s, t \wedge w, z\}$ is an antichain of $D$ contrary to assumption. A dual argument shows that $x < w < s$ is also impossible. \\

Case 5 : Supposing that $y < w < t$ implies that $\{s,w,z\}$ is an antichain. But this is impossible as the width of $D$ is two.\\

Hence, the sublattice $D_1$ generated by $\{p,q,r,w\}$ is isomorphic to $\textbf{2} \times \textbf{4}$. We now carry on a proof by induction. If $D_1 \neq L$, then let $i$ be the least element of $\alpha$ such that $p_i \notin D_1$ and $p_i$ is neither an upper bound nor a lower bound of $D_1$. Set $w_2 = p_i \in L \backslash D_1$. Because the width of $D$ is two, the above argument implies that the sublattice $D_2$ of $D$ generated by $\{p,q,r,w,w_2\}$ is isomorphic to $\textbf{2} \times \textbf{5}$. \\

When $D$ is countable, repeating the above argument indefinitely proves the proposition by induction since $\alpha = \omega$ or $\alpha$ is finite. When $D$ is uncountable, we use transfinite induction: \\

We have nested sequence of lattices $D_1 \subseteq D_2 \subseteq \dots$ isomorphic to $\textbf{2} \times \textbf{3}, \textbf{2} \times \textbf{4}, \dots$ Now assume that for all ordinals $\zeta$ where $D_\zeta$ is defined, $D_\zeta \subseteq D$ and $D_\zeta \cong \textbf{2} \times C_\zeta$ for some chain $C_\zeta$ dependent on $\zeta$ where $|C_\zeta| = |\zeta|$; and assume that for all ordinals $\zeta_1 \leq \zeta_2$ where $D_{\zeta_1}$ and $D_{\beta_2}$ are defined, $D_{\zeta_1} \subseteq D_{\zeta_2}$. For all limit ordinals $\kappa$ such that $D_\zeta$ is defined for all ordinals $\zeta < \kappa$, set $D_\gamma = \cup_{\zeta \in \gamma} D_\zeta \subseteq D$. Now assume that $\kappa$ is the least successor ordinal such that $D_\kappa$ is undefined and $D \neq D_{\kappa'}$ where $\kappa'$ is the ordinal such that $\kappa = \kappa' + 1$. \\

Since $D_{\kappa'} \neq D$, let $i$ be the least ordinal in $\alpha$ such that $p_i \notin D_{\kappa'}$ and $p_i$ is not an upper bound or a lower bound of $D_{\kappa'}$, and set $w_\kappa = p_i$. Assume without loss of generality that $D_{\kappa'} = \textbf{2} \times C_{\kappa'}$ for some chain $C_{\kappa'}$. Set $I = \{q \in C_{\kappa'} : (0,q) \leq w_\kappa \}$ and $F = \{q \in C_{\kappa'} : (1,q) \geq w_\kappa \}$. Since the width of $D$ is two, it follows that $D_{\kappa'} = (\textbf{2} \times I) \cup (\textbf{2} \times F)$, $(\textbf{2} \times I) \cap (\textbf{2} \times F) = \varnothing$, $\textbf{2} \times I$ is empty or an ideal of $D_{\kappa'}$, and $\textbf{2} \times F$ is empty or a filter of $D_{\kappa'}$. \\

So define $D_\kappa$ to be the sublattice of $D$ generated by $D_{\kappa'} \cup \{w_\kappa \}$. By the above arguments used to construct $D_k$ when $k$ is finite, it follows that $D_\kappa \cong \textbf{2} \times C_\kappa$ for some chain $C_\kappa$ satisfying $|C_{\kappa}| = \kappa$. So as the elements of $D$ are well ordered by $\alpha$, we see that for some ordinal $\lambda$ satisfying $|\lambda| \leq \alpha$, $D = D_\lambda$. This completes the proof.
\end{proof}

\section{Semidistributive Varieties}

We consider how developments in Section 1.1 could possibly be generalized. However, we warn the reader that pursuing the strategies proposed may be a very arduous task. \\

The new idea used in Section 1.1 was to consider the free distributive lattice on three generators, $FD(3) = F_\mathcal{D}(3)$ where $\mathcal{D}$ denotes the variety of distributive lattices. Such a structure can be generalized, and if enough is known about such a generalization perhaps more countable sublattices of free lattices can be identified. We now explore some of these possibilities below.\\

A variety $\mathcal{V}$ of lattices is defined to be \emph{semidistributive} (see \cite{VL}) if every lattice in $\mathcal{V}$ is semidistributive. If $\mathcal{S}$ denotes the class of all semidistributive lattices, we have that $\mathcal{D} \subseteq \mathcal{V} \subseteq \mathcal{S}$. So a possible idea is to consider the relatively free lattices $F_\mathcal{V}(3)$ when $\mathcal{V}$ is a semidistributive variety. We note that there are semidistributive lattices which do not belong in any semidistributive variety, for instance the free lattice $FL(3)$. \\

The number of semidistributive varieties turns out to be countable. In 1979 B. J\'onsson and I. Rival had characterized semidistributive varieties in \cite{SNV}, and part of their characterization is as follows (see \cite{VL}):\\

\begin{theorem}(B. J\'onsson and I. Rival) \label{extended semidistributive laws}
	
	The following are equivalent for a variety $\mathcal{V}$ of lattices:
	
	(1) $\mathcal{V}$ is semidistributive.
	
	(2) Let $y_0 = y$, $z_0 = z$, and for all non-negative integers $k$ let $y_{k+1} = y \vee (x \wedge z_k)$ and let $z_{k+1} = z \vee (x \wedge y_k)$. Then for some nonnegative integer $n$, the identity
	
	\begin{center}
		$(SD_n^\wedge): x \wedge (y \vee z) = x \wedge y_n$
	\end{center}
	
	and its dual $(SD_n^\vee)$ are satisfied by every lattice $L$ in $\mathcal{V}$.
\end{theorem}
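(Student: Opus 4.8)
The plan is to prove the equivalence of semidistributivity of the variety $\mathcal{V}$ with the existence of a uniform bound $n$ making $(SD_n^\wedge)$ and $(SD_n^\vee)$ into identities of $\mathcal{V}$. First I would unpack what the sequences $y_k, z_k$ are doing: they are the iterates of the substitution that defines the classical ``meet-semidistributive law'' in its approximate form, and one checks by a straightforward induction that in any lattice the inequality $x \wedge y_n \leq x \wedge (y \vee z)$ holds unconditionally (each $y_k \leq y \vee z$ follows from $y_{k} \leq y \vee z$ and $z_{k} \leq y \vee z$ simultaneously by induction), so the content of $(SD_n^\wedge)$ is only the reverse inequality $x \wedge (y \vee z) \leq x \wedge y_n$. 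This reduction is the bookkeeping step that I would dispose of before the main argument.

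For the direction $(2) \Rightarrow (1)$ I would argue that if $(SD_n^\wedge)$ holds as an identity, then any lattice $L \in \mathcal{V}$ satisfies the join-semidistributive condition $\mathrm{SD}_\vee$: assuming $x \wedge y = x \wedge z = t$, I would show by induction that every $y_k$ collapses to $y \vee t$ (using $x \wedge z_k = t$ at each stage), so that $x \wedge (y \vee z) = x \wedge y_n = x \wedge (y \vee t) = t$, which is exactly meet-semidistributivity; the dual identity $(SD_n^\vee)$ gives join-semidistributivity, and together these say every $L \in \mathcal{V}$ is semidistributive. The heart of the theorem is the converse $(1) \Rightarrow (2)$, where the claim is that semidistributivity of \emph{every} member of $\mathcal{V}$ forces a \emph{single finite} $n$ that works across the whole variety. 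The natural approach is the standard free-algebra/compactness technique: consider the relatively free lattice $F_\mathcal{V}(\{x,y,z\})$ on three generators and examine the element $a = x \wedge (y \vee z)$ together with the ascending chain $x \wedge y_0 \leq x \wedge y_1 \leq \cdots$; semidistributivity of this free lattice should force $a$ to lie in the join of the principal ideals generated by the $x \wedge y_k$, and one wants to conclude that $a = x \wedge y_n$ already holds at some finite stage.

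The hard part will be justifying that the finite stage is reached \emph{uniformly}, i.e.\ that no member of $\mathcal{V}$ needs a larger $n$ than the one obtained in $F_\mathcal{V}(3)$. The clean way to guarantee this is to observe that $(SD_n^\wedge)$ is a three-variable identity, so it holds throughout $\mathcal{V}$ if and only if it holds in $F_\mathcal{V}(\{x,y,z\})$ under the assignment sending the variables to the three free generators; thus it suffices to produce one $n$ valid in this single countable lattice. The obstacle is then to show that the chain $\{x \wedge y_k\}$ in $F_\mathcal{V}(3)$ actually stabilizes rather than strictly ascending forever. I would attempt this by invoking that the interval $[x \wedge y_0,\, x \wedge (y \vee z)]$ in a finitely generated lattice of a semidistributive variety is well-behaved enough to be finite, or more carefully by appealing to J\'onsson and Rival's structural analysis in \cite{SNV} of the critical quotients; absent that, the genuine difficulty is that semidistributivity is an implication-type (quasi-identity-flavoured) condition while $(SD_n^\wedge)$ is a strict identity, and converting the former uniformly into the latter is precisely the nontrivial content that I expect to require the detailed combinatorics of \cite{SNV} rather than a soft compactness argument alone.
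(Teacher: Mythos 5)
First, a point of reference: the paper does not prove this theorem at all --- it is quoted as background, with the explicit remark that ``Theorem \ref{extended semidistributive laws} is proven in \cite{SNV} and \cite{VL}.'' So there is no in-paper argument to compare yours against; your proposal has to be judged on its own.

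Your bookkeeping step ($x \wedge y_n \leq x \wedge (y \vee z)$ always holds) and your direction $(2) \Rightarrow (1)$ are correct, modulo a notational slip: the implication $x \wedge y = x \wedge z \Rightarrow x \wedge (y \vee z) = x \wedge y$ that you derive from $(SD_n^\wedge)$ is \emph{meet}-semidistributivity $SD_\wedge$, not ``$\mathrm{SD}_\vee$'' as you first label it (you correct yourself a line later). The reduction of $(1) \Rightarrow (2)$ to the single lattice $F_\mathcal{V}(\{x,y,z\})$ evaluated at its free generators is also standard and correct. The genuine gap is exactly where you flag it: the claim that semidistributivity of $F_\mathcal{V}(3)$ ``should force'' the ascending chain $x \wedge y_0 \leq x \wedge y_1 \leq \cdots$ to reach $x \wedge (y \vee z)$ at a finite stage is not an argument. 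The join of that chain need not exist in $F_\mathcal{V}(3)$, and semidistributivity is a quasi-identity about \emph{finite} joins and meets, so it gives no direct control over whether an infinite ascending chain stabilizes; your fallback (``the interval is well-behaved enough to be finite'') is not true in general and is not justified. The actual content of J\'onsson and Rival's proof is a construction showing that if the chain fails to terminate, then some member of $\mathcal{V}$ (obtained from $F_\mathcal{V}(3)$ or a quotient/ultraproduct built from the witnesses of failure) contains a configuration violating $SD_\wedge$ --- concretely, one of a finite list of forbidden lattices. That construction is the entire theorem, and your proposal defers it wholesale to \cite{SNV}. As written, this is a correct outline of the easy direction plus an honest statement of where the hard direction lives, not a proof of the hard direction.
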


Theorem \ref{extended semidistributive laws} is proven in \cite{SNV} and \cite{VL}. We write $\mathcal{S}_n$ to denote the variety of lattices which satisfy $(SD_n^\wedge)$ and $(SD_n^\vee)$. We have this: \\

\begin{center}
	$\mathcal{D} = \mathcal{S}_1 \subset \mathcal{S}_2 \subset \mathcal{S}_3 \subset \dots \subset S$
\end{center}

This motivates us to study the following sequence of lattices: \\
$F_{\mathcal{S}_1}(3), F_{\mathcal{S}_2}(3), F_{\mathcal{S}_3}(3), F_{\mathcal{S}_4}(3), \dots$. We note that we have this sequence of onto homomorphisms:

\begin{center}
	$FD(3) = F_{\mathcal{S}_1}(3) \leftarrow F_{\mathcal{S}_2}(3) \leftarrow F_{\mathcal{S}_3}(3) \leftarrow F_{\mathcal{S}_4}(3) \leftarrow \dots$
\end{center}

The new proof in Section 1.1, the result of Section 1.1 being largely responsible for characterizing distributive sublattices of free lattices up to isomorphism, relied on knowledge of the lattice $F_{\mathcal{S}_1}(3) = FD(3)$. Perhaps if we knew more about the relatively free lattices $F_{\mathcal{S}_2}(3), F_{\mathcal{S}_3}(3), F_{\mathcal{S}_4}(3), F_{\mathcal{S}_5}(3), \dots$ we could make progress on the following problem:

\begin{problem} \label{odessey}
	Which countable lattices belonging to a semidistributive variety are isomorphic to a sublattice of a free lattice?
\end{problem}

F. Galvin and B. J\'onsson's paper \cite{DSFL} answered Problem \ref{odessey} for the variety of distributive lattices. But not much is known even for the variety $\mathcal{S}_2$ of \emph{nearsemidistributive} lattices. \\

However, if the lattice $F_{\mathcal{S}_2}(3)$ is finite, it may be possible to extend the new proof from Section 1.1 to analyse near semidistributive sublattices of free lattices. I will make the following observations:\\

\begin{proposition} \label{partial answer}
	
Let $\mathcal{S}_n^\vee$ denote the variety of lattices which satisfy $(SD_n^\vee)$ and let  $\mathcal{S}_n^\wedge$ denote the variety of lattices which satisfy $(SD_n^\wedge)$. Then $F_{\mathcal{S}_2^\vee}(3)$ and $F_{\mathcal{S}_2^\wedge}(3)$ are countably infinite lattices.

\end{proposition}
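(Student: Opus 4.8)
The plan is to construct explicitly an infinite ascending chain (or antichain structure) inside $F_{\mathcal{S}_2^\vee}(3)$ and its dual, thereby showing that these relatively free lattices cannot be finite. The key conceptual point is that $\mathcal{S}_2^\vee$ is defined by only \emph{one} of the two dual semidistributivity laws: the variety $\mathcal{S}_2^\vee$ satisfies $(SD_2^\vee)$ but is \emph{not} required to satisfy $(SD_2^\wedge)$. Because the defining law is one-sided, the variety is much larger than $\mathcal{S}_2 = \mathcal{S}_2^\vee \cap \mathcal{S}_2^\wedge$, and the corresponding free lattice on three generators should be big enough to contain an infinite chain. First I would write down the defining identity $(SD_2^\vee)$ explicitly from Theorem \ref{extended semidistributive laws}, namely $x \vee (y \wedge z) = x \vee z_2$ where $z_2 = z \wedge (x \vee (y \wedge (x \vee z)))$ and $y_2 = y \wedge (x \vee (z \wedge (x \vee y)))$, and verify that it is a genuine lattice identity so that $\mathcal{S}_2^\vee$ is indeed a variety of lattices in which $F_{\mathcal{S}_2^\vee}(3)$ exists.

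The main step is to exhibit the infinite set. I would work inside $F_{\mathcal{S}_2^\vee}(\{a,b,c\})$ and define a sequence of terms, analogous to the iterated terms $y_k, z_k$ appearing in the statement of Theorem \ref{extended semidistributive laws}, such as $t_0 = a \wedge b$ and $t_{k+1} = (a \wedge b) \vee (c \wedge t_k)$ (or the appropriate three-generator analogue), and argue that these terms form a strictly ascending chain in $F_{\mathcal{S}_2^\vee}(3)$. To prove strictness I cannot merely compute in $\mathcal{S}_2^\vee$ directly; instead I would find, for each $n$, a concrete lattice $L_n \in \mathcal{S}_2^\vee$ together with an assignment of $a,b,c$ into $L_n$ under which the images of $t_n$ and $t_{n+1}$ are distinct. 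Since a single equality $t_n = t_{n+1}$ holding in every lattice of $\mathcal{S}_2^\vee$ would collapse the chain, producing such separating lattices $L_n$ witnesses that all the $t_k$ are pairwise distinct elements of the free lattice, forcing $|F_{\mathcal{S}_2^\vee}(3)| = \aleph_0$. The upper bound (countability) is immediate: $F_{\mathcal{S}_2^\vee}(3)$ is a quotient of the free lattice $FL(3)$, which is countable, so no cardinality argument beyond producing infinitely many distinct elements is required. The claim for $F_{\mathcal{S}_2^\wedge}(3)$ then follows by order duality, since $\mathcal{S}_2^\wedge$ is defined by the dual identity $(SD_2^\wedge)$ and the dual of a relatively free lattice over a self-dual generating set is the relatively free lattice over the dual variety.

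The hard part will be the separation step: constructing, for each $n$, an explicit finite (or manageable) lattice $L_n$ that genuinely lies in $\mathcal{S}_2^\vee$ yet distinguishes $t_n$ from $t_{n+1}$. This requires care because one must \emph{verify} that $L_n$ satisfies $(SD_2^\vee)$ for \emph{all} substitutions of its elements, not just the distinguished one, and a natural family of candidate lattices here is a sequence resembling $\textbf{2} \times \textbf{n}$ or truncations of $\textbf{2} \times \mathbb{Z}$, possibly with an extra incomparable element to break the dual law. An alternative, cleaner route that I would try first is to identify a single infinite lattice $M \in \mathcal{S}_2^\vee$ (for example a suitable modification of $\textbf{2} \times \mathbb{Z}$, which is distributive hence automatically in every $\mathcal{S}_n^\vee$, augmented so that the one-sided law still holds) generated by three elements whose generated sublattice is already infinite; the existence of such an $M$ in the variety immediately forces $F_{\mathcal{S}_2^\vee}(3)$ to surject onto it and hence to be infinite. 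The delicate verification in either approach is confirming membership in $\mathcal{S}_2^\vee$ while ensuring non-membership in $\mathcal{S}_2^\wedge$ is not accidentally forced, since it is precisely the failure of the dual law that makes the free lattice infinite rather than finite as in the distributive case $\mathcal{S}_1 = \mathcal{D}$.
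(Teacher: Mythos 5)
Your strategy is the right one, and your ``alternative, cleaner route'' is exactly what the paper does: exhibit a single infinite lattice in $\mathcal{S}_2^\vee$ generated by three elements, and use the defining property of the relatively free lattice to get a surjection $F_{\mathcal{S}_2^\vee}(3) \twoheadrightarrow M$, with countability coming for free since $F_{\mathcal{S}_2^\vee}(3)$ is a quotient of the countable lattice $FL(3)$. The dual argument for $F_{\mathcal{S}_2^\wedge}(3)$ also matches the paper.

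The genuine gap is that you never actually produce the witness, and that witness is the entire content of the proposition. Both of your routes defer the crucial step: in the first you must construct, for every $n$, a lattice $L_n \in \mathcal{S}_2^\vee$ separating $t_n$ from $t_{n+1}$, and in the second you must name a concrete infinite $3$-generated member of $\mathcal{S}_2^\vee$; you explicitly label both as ``the hard part'' and leave them open. Note also that your candidate starting point cannot work as stated: any distributive lattice generated by three elements is a quotient of $FD(3)$ and hence has at most $|FD(3)| = 18$ elements, so no sublattice of $\textbf{2} \times \mathbb{Z}$ generated by three elements is infinite; the witness must genuinely fail distributivity (indeed it must escape every variety $\mathcal{V}$ with $F_{\mathcal{V}}(3)$ finite) while still satisfying the one-sided law $(SD_2^\vee)$ under \emph{all} substitutions, which is the delicate verification you flag but do not carry out. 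The paper closes exactly this gap by citation rather than construction: it takes the lattice $Z_d$ with distinguished elements $a,b,c$ from p.~214 of Reinhold's paper \cite{WDLFL}, observes that the sublattice $Z_d'$ generated by $a,b,c$ is infinite and satisfies $(SD_2^\vee)$, and then runs your surjection argument. So your proof becomes complete once you either import such a lattice from the literature or verify one explicitly; without that, the argument establishes only the (easy) countable upper bound.
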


\begin{proof} (Brian T. Chan) Consider the lattice $Z_d$ and the elements $a$, $b$, and $c$ as depicted p 214 of in J. Reinhold's paper \cite{WDLFL}; and let $Z_d'$ denote the infinite sublattice of $Z_d$ generated by $a,b,c$. Since the lattice $Z_d'$ satisfies $(SD^\vee_2)$, $Z_d' \in \mathcal{S}_2^\vee$. So there exists an onto homomorphism $f: F_{\mathcal{S}_2^\vee}(3) \to Z_d'$ induced by mapping the three generators of $F_{\mathcal{S}_2^\vee}(3)$ to $\{a,b,c\} \subseteq Z_d'$, implying that $F_{\mathcal{S}_2^\vee}(3)$ is infinite (and hence $F_{\mathcal{S}_n^\vee}(3)$ is infinite for $n \geq 3$) since $Z_d'$ is an infinite lattice. A dual argument can be used for $F_{\mathcal{S}_2^\wedge}(n)$ when $n \geq 3$\\

\end{proof}

More generally, we can consider the lattices $F_{\mathcal{S}_k}(n)$ for positive integers $k$ and $n$. 
If $F_{\mathcal{S}_k}(n)$ is a finite bounded lattice, then by A. Day's characterization theorem, $F_{\mathcal{S}_k}(n)$ can be obtained from a one element lattice by a sequence of interval doublings; this construction will be explained in more detail in the next section. This could be a very useful for generalizing the new proof from Section 1.1 since we may be able to construct $F_{\mathcal{S}_k}(n)$ more easily. I will make the following observation. Recall that the notion of a bounded lattice does not require the existence of maximum or minimum elements (see \cite{FL} and \cite{VL} for definitions).

\begin{proposition} \label{partial answer 2}
Let $n$ be a positive integer. If $F_{\mathcal{S}_2}(n)$ is a finite, then $F_{\mathcal{S}_2}(n)$ is a bounded lattice. 
\end{proposition}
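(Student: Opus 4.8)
The plan is to show that the finite lattice $F := F_{\mathcal{S}_2}(n)$ is simultaneously \emph{lower bounded} and \emph{upper bounded}, since for a finite lattice these two conditions together are equivalent to boundedness, and by Day's characterization theorem recalled above a finite bounded lattice is exactly one obtainable from $\textbf{1}$ by a sequence of interval doublings. I would work with the intrinsic description of boundedness for finite lattices (see \cite{FL}): $F$ is lower bounded if and only if the join-dependency relation $D$ on the set $J(F)$ of join-irreducibles is cycle-free, and dually $F$ is upper bounded if and only if the corresponding relation on the meet-irreducibles $M(F)$ is cycle-free. It should be stressed at the outset that boundedness genuinely requires the ambient variety to be semidistributive: every bounded lattice is semidistributive, but finiteness together with semidistributivity does \emph{not} imply boundedness, since there exist finite semidistributive lattices that fail to be bounded. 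Thus the freeness of $F$ must enter essentially. The necessity of semidistributivity is illustrated by the free \emph{modular} lattice on three generators, which is finite but not semidistributive (it contains a copy of $M_3$) and hence not bounded.

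The first real simplification I would record is that $\mathcal{S}_2$ is a \emph{self-dual} variety: it is axiomatized by $(SD_2^\wedge)$ together with its order-dual $(SD_2^\vee)$, so the class is closed under order-duals. Consequently the order-dual $F^{\partial}$ again satisfies the universal property defining the free lattice on $n$ generators in $\mathcal{S}_2$, whence $F^{\partial} \cong F$. Because passing to the order-dual interchanges lower and upper boundedness, $F$ is lower bounded if and only if $F^{\partial}$ is upper bounded if and only if $F$ is upper bounded. It therefore suffices to prove that $F$ is lower bounded; upper boundedness is then automatic, and $F$ is bounded.

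It remains to prove that the join-dependency relation $D$ on $J(F)$ has no cycles. Since $\mathcal{S}_2$ is semidistributive by Theorem \ref{extended semidistributive laws}, the lattice $F$ is finite and semidistributive, so it has canonical join representations and $D$ is well defined via minimal nontrivial join covers of join-irreducibles (see \cite{FL} and \cite{VL}). The key step, and the point at which freeness enters, is to show that a $D$-cycle $p_0 \mathrel{D} p_1 \mathrel{D} \cdots \mathrel{D} p_{k} = p_0$ is incompatible with the \emph{uniform} bound $n = 2$ appearing in the law $(SD_2^\wedge)$. The intended mechanism is that a nontrivial $D$-cycle encodes a circular chain of minimal join covers whose repeated refinement never stabilizes; by reading the cycle back through the canonical surjection $FL(n) \twoheadrightarrow F$, and using that in the free object no identifications occur beyond those forced by $(SD_2^\wedge)$ and $(SD_2^\vee)$, one exhibits an instance of the iteration $y_0, y_1, y_2, \dots$ of Theorem \ref{extended semidistributive laws} that fails to stabilize at step $2$, contradicting $(SD_2^\wedge)$. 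I expect this to be the main obstacle: making rigorous the passage from an abstract $D$-cycle to a concrete failure of the length-$2$ semidistributive iteration, which appears to require either a direct analysis of the canonical join representations in $F_{\mathcal{S}_2}(n)$ or an appeal to a structural theorem identifying finite free lattices in semidistributive varieties as bounded homomorphic images of $FL(n)$.

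Once $D$ is shown to be cycle-free, $F$ is lower bounded, hence by the self-duality reduction also upper bounded, and therefore bounded; Day's theorem then presents $F$ as an iterated interval doubling of $\textbf{1}$, as desired. As a sanity check on the extremal cases, $F_{\mathcal{S}_2}(1) \cong \textbf{1}$ and $F_{\mathcal{S}_2}(2) \cong \textbf{2} \times \textbf{2}$ are trivially bounded, so the content of the statement is concentrated in the range $n \geq 3$, which is precisely the range in which the finiteness of $F_{\mathcal{S}_2}(n)$ is itself at issue.
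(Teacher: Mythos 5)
Your overall skeleton agrees with the paper's: for a finite lattice, boundedness amounts to the absence of $C$-cycles (equivalently, cycles in the join-dependency relation on $J(F)$ and dually on $M(F)$), and the self-duality of $\mathcal{S}_2$ lets you handle only one side. But the decisive step --- deriving a contradiction from the existence of such a cycle --- is precisely the part you leave open, and the mechanism you sketch in its place (pulling the cycle back through $FL(n) \twoheadrightarrow F_{\mathcal{S}_2}(n)$ and exhibiting a failure of the length-$2$ iteration in $(SD_2^\wedge)$) is neither carried out nor the way the argument actually goes. The paper closes the gap with a citable observation of H. Rose (see \cite{VL}): a finite \emph{semidistributive} lattice containing a $C$-cycle must contain a sublattice isomorphic to $L_{11}$ or $L_{12}$. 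Neither $L_{11}$ nor $L_{12}$ is neardistributive, i.e., neither satisfies both $(SD_2^\wedge)$ and $(SD_2^\vee)$; since sublattices of members of $\mathcal{S}_2$ again lie in $\mathcal{S}_2$, no lattice in $\mathcal{S}_2$ can contain either one, so there is no $C$-cycle and $F_{\mathcal{S}_2}(n)$ is bounded. Without this input, or an equivalent structural fact, your proof does not go through.

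A second, related point: your insistence that ``the freeness of $F$ must enter essentially'' is misplaced. Freeness plays no role in the paper's argument, which in fact establishes the stronger statement that \emph{every} finite lattice in $\mathcal{S}_2$ is bounded. You are right that finiteness plus semidistributivity does not imply boundedness, but by Rose's observation the minimal witnesses to that failure are $L_{11}$ and $L_{12}$, and these are excluded not by any canonical-form analysis of the free object but by the strictly-stronger identities $(SD_2^\wedge)$ and $(SD_2^\vee)$ defining the variety. The hypothesis doing the work is neardistributivity, not freeness.
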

\begin{proof}(Brian T. Chan) The lattices $L_{11}$ and $L_{12}$ as described in \cite{VL} and H.Rose's observation from \cite{VL} regarding these lattices and $C-$cycles on finite semidistributive lattices will be used. See \cite{VL} and \cite{FL} for notation or concepts being used. Assume that $F_{\mathcal{S}_2}(n)$ is finite. If $F_{\mathcal{S}_2}(n)$ is not bounded, then without loss of generality assume that $F_{\mathcal{S}_2}(n) \neq D(F_{\mathcal{S}_2}(n))$. As $F_{\mathcal{S}_2}(n)$ is finite, there is a $C-$cycle in $F_{\mathcal{S}_2}(n)$. By H. Rose's observation, it follows that $F_{\mathcal{S}_2}(n)$ contains a sublattice isomorphic to $L_{11}$ or $L_{12}$. But the lattices $L_{11}$ and $L_{12}$ are not neardistributive, which is an impossibility.

\end{proof}

We now see how the new proofs for F. Galvin and B. J\'onsson's work can possibly be extended in another way.

\section{Spanning Pairs and Finite Width Sublattices of Free Lattices}

The width of a lattice is the supremum of the cardinalities of all antichains in that lattice. In particular, a finite width lattice is a lattice such that for some positive integer $k$, every antichain has cardinality at most $k$. The machinery used in Section 1.1 reduced the number of possible cases to consider when analysing distributive sublattices. However, the machinery used in Section 1.2 is related to how distributive sublattices of free lattices can be constructed. We consider the following idea: A subset $C \subseteq L$ of a lattice $L$ is \emph{convex} if for all $p,q,r \in L$, $p,q \in C$ and $p \leq r \leq q$ implies $r \in C$. We introduce A. Day's \emph{doubling construction} following \cite{FL}:\\

\begin{definition} [A. Day's Doubling Construction]
	Let $L$ be a lattice and $C$ be a convex subset of $L$. Define $L[C]$ to be the disjoint union $(L \backslash C) \cup (C \times 2)$ with $p \leq q$ if one of the following holds.\\
	
	(1) $p,q \in L \backslash C$ and $p \leq q$ holds in $L$\\
	
	(2) $p,q \in C \times 2$ and $p \leq q$ holds in $C \times 2$\\
	
	(3) $p \in L \backslash C$, $q = (u,i) \in I \times 2$, and $p \leq u$ holds in $L$\\
	
	(4) $p = (v, i) \in C \times 2$, $q \in L \backslash C$, and $v \leq q$ holds in $L$.
\end{definition}

Such a construction is an interval doubling construction if $C$ is assumed to be an interval: with $p \leq q$ in $L$, $C = \{r \in L : p \leq r \leq q \}$. As evident in Theorem \ref{the characterization}, all distributive sublattices of free lattices can be constructed from a \emph{countable chain} by repeatedly applying A. Day's doubling construction. This idea also applies to finite sublattices of free lattices: A result from A. Day (see \cite{FBL}, \cite{VL}, and \cite{FL}) implies that every finite sublattice of a free lattice can be constructed from a \emph{one element lattice} by repeatedly applying A. Day's interval doubling construction. I will prove a result which restricts how A.Day's doubling construction can be used for sublattices of free lattices by introducing the notion of a \emph{spanning pair}. We write $p \prec q$ to mean that $q$ \emph{covers} $p$ (this means that if $p \leq r \leq q$ then $r = p$ or $r = q$.)

\begin{definition}
Let $L$ be a lattice with no minimal element or maximal element. A spanning pair $|p,q|$ consists of two elements $p \prec q$ in $L$ and two sequences $p < p_1 < p_2 < \dots$ and $q > q_1 > q_2 > \dots$ in $L$ such that: $q,q_1,q_2,\dots$ has no lower bound in $L$, $p,p_1,p_2,\dots$ has no upper bound in $L$, $q \nleq p_m$ for all $m$, and $p \ngeq q_n$ for all $n$. We say that a spanning pair $|p,q|$ is induced by $(p,q)$.
\end{definition}

If $L$ is a lattice and $C$ is a convex subset of $L$, then it is not hard to see that a necessary condition for the lattice $L[C]$ to satisfy Whitman's condition is that every non maximal element and every non minimal element of $C$ must be doubly irreducible. In particular, the following can be said as lattice congruences are compatible with join and meet. If $|p,q|$ is a spanning pair of $L[C]$, $L[C]$ satisfies Whitman's condition, and $p = (0,c)$ and $q = (1,c)$ for some $c \in C$, then $C$ is a chain because $\textbf{2} \times C$ contains $|p,q|$.\\

This observation can be carried further. We prove Theorem \ref{spanning pair} which is a new result concerning lattices with no doubly reducible elements. In summary, Theorem \ref{spanning pair} asserts the following in almost all cases: If $L$ is a sublattice of a free lattice and if there exists another lattice $L'$ such that $L \cong L'[C]$ for some convex subset $C$ of $L$, then $C$ must ``avoid'' all spanning pairs of $L$. Moreover, the proof of Theorem \ref{spanning pair} uses \emph{gadgets}.\\

\begin{theorem} \label{spanning pair}
Let $L$ be a countable lattice with no doubly reducible elements. If there is a spanning pair $|p,q|$ of $L$, a sublattice $L' \subseteq L$, a convex subset $C$ of $L'$, and an isomorphism $f: L \to L'[C]$ such that $f(p) = (0,c)$ and $f(q) = (1,c)$ for some $c \in C$, then $L \cong \textbf{2} \times Z$ for some chain $Z$.
\end{theorem}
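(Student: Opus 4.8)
The goal is to show that the hypothesized spanning pair, together with the doubling structure $L \cong L'[C]$ carrying $f(p) = (0,c)$ and $f(q) = (1,c)$, forces $L$ to be $\mathbf{2} \times Z$ for a chain $Z$. The plan is to work inside $L'[C]$ via the isomorphism $f$, so I identify $L$ with $L'[C]$ and write $p = (0,c)$, $q = (1,c)$. First I would unpack what the spanning pair gives me: ascending and descending sequences $p < p_1 < p_2 < \dots$ and $q > q_1 > q_2 > \dots$ with no upper bound (resp.\ lower bound), and the incomparabilities $q \nleq p_m$, $p \ngeq q_n$ for all $m,n$. The first key step is to transfer these sequences through the doubling construction: because $(0,c) \prec (1,c)$ is a doubled cover, the $p_m$ and $q_n$ must lie in the ``ambient'' part in a controlled way, and I would use the defining order relations (1)--(4) of Day's construction to locate them relative to $c$ and $C$.

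\textbf{The main structural step.}
The heart of the argument is to show $C$ is a chain. The excerpt already records the crucial observation preceding the theorem: if $|p,q|$ is a spanning pair of $L[C]$, the lattice satisfies Whitman's condition, and $p=(0,c)$, $q=(1,c)$, then $C$ is a chain, ``because $\mathbf{2} \times C$ contains $|p,q|$.'' So the plan is to first establish that $L$ satisfies Whitman's condition: since $L$ has no doubly reducible elements and is a countable lattice of the relevant type, I would invoke the surrounding framework (Theorem \ref{the restriction} and the remark that lattices with no doubly reducible elements in this distributive-style setting satisfy Whitman's condition) to conclude $W$ holds, or argue directly that the presence of the spanning pair obstructs any Whitman failure at $(0,c),(1,c)$. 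Granting $W$, the quoted observation yields that $C$ must be a chain. The reason is that congruences are compatible with join and meet, so the copy of $\mathbf{2}\times C$ sitting over $C$ in $L[C]$ embeds, and a non-chain $C$ would produce an antichain inside $\mathbf{2}\times C$ incompatible with the spanning-pair incomparabilities.

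\textbf{Assembling the product decomposition.}
Once $C$ is a chain, $C \times 2$ is exactly $\mathbf{2} \times C$, and I would show that the whole of $L'[C]$ collapses onto a single $\mathbf{2} \times Z$ structure. Here the gadget machinery from Section 1.2 does the work: for any element $w$ of $L$ outside the copy of $\mathbf{2}\times C$, the spanning sequences $\{p_m\}$ and $\{q_n\}$ force $w$ into a gadget $G_L$ with the doubled cover, and since $L$ has no doubly reducible elements and is modular, each such gadget is isomorphic to $\mathbf{2}\times\mathbf{3}$ (as in the proof of Proposition \ref{ladder}), extending the chain coordinate $Z$ one step at a time. Running the same inductive/transfinite construction used in the proof of Proposition \ref{ladder} --- repeatedly absorbing a new element into a growing $\mathbf{2}\times C_\kappa$ --- exhausts the countable lattice $L$ and produces a chain $Z$ with $L \cong \mathbf{2}\times Z$.

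\textbf{The main obstacle.}
I expect the hardest step to be verifying that $L$ genuinely satisfies Whitman's condition at the doubled cover, since the theorem only assumes ``no doubly reducible elements'' and does not assume distributivity outright. The quoted pre-theorem observation gives Whitman's condition as a \emph{necessary} condition on $L[C]$, but turning the spanning pair plus the no-doubly-reducible hypothesis into the full conclusion that $C$ is a chain requires checking that no two incomparable elements of $C$ can coexist with the unbounded sequences $\{p_m\}$, $\{q_n\}$ --- this is where the incomparabilities $q \nleq p_m$ and $p \ngeq q_n$ must be used essentially, to rule out the alternative gadget shapes and the $N_5$ configuration. Pinning down exactly why those incomparabilities preclude a two-element antichain in $C$ is the delicate point, and the phrase ``in almost all cases'' in the surrounding discussion suggests that a small amount of case analysis on the position of the exceptional element relative to $c$ will be unavoidable there.
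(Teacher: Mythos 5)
Your proposal diverges substantially from the paper's argument, and the route you choose has gaps that I do not think can be repaired from the stated hypotheses. The paper's proof never passes through Whitman's condition or through the chain-ness of $C$ at all: it works directly with the two unbounded sequences $p < p_1 < p_2 < \dots$ and $q > q_1 > q_2 > \dots$ of the spanning pair, extracts subsequences along which $q \vee p_{n_k}$ strictly increases, and then proves the key assertion that some $p_1'$ with $p_{n_k} \leq p_1' \prec q \vee p_{n_k}$ must exist --- by showing that otherwise one can manufacture arbitrarily large antichains of meet-reducible (dually, join-reducible) elements, a contradiction. Iterating this produces chains $p < p_1' < p_2' < \dots$ and $q > q_1' > q_2' > \dots$ whose generated sublattice is $\textbf{2} \times \mathbb{Z}$, and a final semidistributivity argument shows no element of $L$ can sit strictly inside this copy. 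The unboundedness clauses in the definition of a spanning pair are essential to that construction; your proposal never uses them.

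The concrete gaps in your route are these. First, you need $L$ to satisfy Whitman's condition in order to invoke the pre-theorem observation that $C$ is a chain, but the hypothesis is only that $L$ has no doubly reducible elements, and the implication runs the other way: $(W)$ implies no doubly reducible elements, not conversely. Theorem \ref{the restriction} is about distributive lattices and cannot be invoked here, since $L$ is not assumed distributive. You flag this as ``the main obstacle'' but offer no argument to close it. Second, your assembly step asserts that each gadget containing an outside element $w$ is isomorphic to $\textbf{2} \times \textbf{3}$ ``since $L$ has no doubly reducible elements and is modular'' --- but $L$ is not assumed modular, so the $M_3$--$N_5$ reduction used in the proof of Proposition \ref{ladder} is unavailable, and a gadget could take any of the other shapes in Figure 4. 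Third, even granting both points, the transfinite absorption argument from Proposition \ref{ladder} relies on $D$ having width two, which is likewise not a hypothesis here. As it stands the proposal substitutes the hypotheses of Proposition \ref{ladder} for those of Theorem \ref{spanning pair}, and the substitution is not justified.
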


\begin{proof} (Brian T. Chan) Let $|p,q|$ be a spanning pair of $L$ with $p < p_1 < p_2 < \dots$ and $q > q_1 > q_2 > \dots$ Because $|p,q|$ is a spanning pair, we can define the following infinite subsequence $p_{n_1} < p_{n_2} < \dots$ of $p < p_1 < p_2 < \dots$:\\

Let $p_{n_1} = p_1$, and for all positive integers $k$, let $p_{n_{k+1}}$ satisfy $q \vee p_{n_k} < q \vee p_{n_{k+1}}$. Dually, define a subsequence $q > q_{n_1} > q_{n_2} > \dots$ of $q > q_1 > q_2 > \dots$ in the same way that $p < p_{n_1} < p_{n_2} < \dots$ was defined. Now we show \textbf{(1)}:\\

\begin{center}
\textbf{(1)} \emph{There exists a positive integer $k$ and an element $p_1'$ such that $p_{n_k} \leq p_1' \prec q \vee p_{n_k}$.}
\end{center}

For all positive integers $k$, set $a_k = q \vee p_{n_k}$ and $p_{0,k} = p_{n_k}$. Suppose that \textbf{(1)} is false, we first show by contradiction the following.\\

\begin{center}
\textbf{(2)} \emph{There is a positive integer $m_1$ and an element $r_1$ such that $p_{0,m_1} < r_1 < a_{m_1}$ and the following occurs with $p_{1,k} = p_{0,m_1 + k - 1}$: For all positive integers $k$, $r_1 \vee p_{1,k} > p_{1,k}$.}
\end{center}

Since \textbf{(1)} is assumed to be false, let $r_{1,1}$ satisfy $a_1 \wedge p_{0,2} < r_{1,1} < a_1$. Then as $r_{1,1} \nleq p_{0,2}$, $r_{1,1} \vee p_{0,2} > p_{0,2}$. If $r_{1,1} \vee p_{0,k} > p_{0,k}$ for all $k$, then set $m_1 = 1$ and $r_1 = r_{1,1}$. Otherwise, let $m' > 2$ be the least positive integer such that $r_{1,1} \leq p_{0,m'}$. Now pick an $r_{1,2}$ such that $a_{m'} \wedge p_{0,m'+1} < r_{1,2} < a_{m'}$, $r_{1,2} \wedge a_2 > r_{1,1} \vee p_{0,2}$, and $r_{1,2} \wedge a_1 > r_{1,1}$. Such an element $r_{1,2}$ exists because \textbf{(1)} is assumed to be false and $p \prec q$. Otherwise, there is an $s$ such that $a_{m'} \wedge p_{0,m'+1} < s < a_{m'}$ and a $t$ such that $r_1 < t < a_1$ or $r_1 \vee p_{0,2} < t < a_2$ which satisfy the following: $q \wedge s = q \wedge t = p$, but $q \wedge (s \vee t) = q \wedge a_{m'} = q$ which would violate the meet semidistributive laws. \\

Like before, $r_{1,2} \vee (r_1 \vee p_{0,m'+1}) > r_1 \vee p_{0,m'+1}$ and for some positive integer $m''$ satisfying $m'' \geq m' + 2$, $r_{1,2} \leq r_1 \vee p_{0,m''}$. By the above arguments, there is an element $r_{1,3}$ such that: $a_{m''} \wedge p_{0,m''+1} < r_{1,3} < a_{m''}$, $r_{1,3} \wedge a_{m'+1} > r_{1,2} \vee p_{0,m'+1}$, $r_{1,3} \wedge a_{m'} > r_{1,2}$, $r_{1,3} \wedge a_2 > r_{1,2} \wedge a_2$, and $r_{1,3} \wedge a_1 > r_{1,2} \wedge a_1$. As we are supposing that both \textbf{(1)} and \textbf{(2)} are false, this process can be continued indefinitely. But then there is a sequence of antichains $B_2, B_3, B_4, \dots$ in $L$ where $B_2 = \{ r_{1,2} \wedge a_2, r_{1,3} \wedge a_1 \}, B_3 = \{ r_{1,3} \wedge a_{m'}, r_{1,4} \wedge a_2, r_{1,5} \wedge a_1 \}, \dots$, $|B_k| = k$ for all $k$, and every element of $B_k$ is meet reducible. But this is contrary to assumption. Hence, \textbf{(2)} follows assuming that \textbf{(1)} is false.\\

We note that $r_1 < r_1 \vee p_{1,2} < r_1 \vee p_{1,3} < \dots$. Since $p \prec q$, $q \wedge r_1 = q \wedge p_{1,k} = p$ for all $k \geq 2$. So by the meet semidistributive laws, $p_{1,k} < r_1 \vee p_{1,k} < q \vee p_{1,k} = a_{m_1 + k - 1}$ for all $k \geq 2$. Since \textbf{(1)} is assumed to be false, the above proof of \textbf{(2)}, can be generalized to show that there is a positive integer $m_2 \geq m_1$ and an element $r_2$ such that $r_1 \vee p_{1,m_2} < r_2 < a_{m_2}$ and the following occurs with $p_{2,k} = r_1 \vee p_{1,m_2 + k - 1}$: For all positive integers $k$, $r_1 \vee p_{2,k} > p_{2,k}$. We note that $(p,q)$ with the sequences $p < r_2 \vee p_{2,1} < r_2 \vee p_{2,2} < r_2 \vee p_{2,3} < \dots$ and $q > q_1 > q_2 > \dots$ also form a spanning pair of $L$. Hence, we can replicate the above arguments to produce the following infinite sequence of sequences $(p_{n,k})_k$ and infinite sequence of elements $r_n$ satisfying $p_{n,1} < r_n < a_{m_n}$ for all $n = 0,1,2,3,\dots$. In particular, there is a sequence of antichains $A_2, A_3, A_4, \dots$ in $L$ where $A_2 = \{ r_2 \vee p_{2,2}, p_{2,3} \}, A_3 = \{r_3 \vee p_{3,2}, p_{3,3}, r_1 \vee p_{0, m_3 + k - 1}\}, \dots$, every element of $A_k$ is join reducible, and for all $k$, $|A_k| = k$. But this is also contrary to assumption.\\

Hence, \textbf{(1)} is true. Let $p_1'$ and $k$ be as described in \textbf{(1)}. To see that $p_1' \vee p_{n_j} < q \vee p_{n_j}$ for all $j \geq k$, suppose that $p_1' \vee p_{n_j} = q \vee p_{n_j}$. Since $p \prec q$, $q \wedge p_1' = q \wedge p_{n_j} = p$ and $q \wedge (p_1' \vee p_{n_j}) = q \wedge (q \vee p_{n_j}) = q$. But that is impossible as $L$ is meet semidistributive. Using assertion \textbf{(1)} indefinitely, we obtain a chain $p < p_1' < p_2' < \dots$ such: $(p,q)$, $p < p_1' < p_2' < \dots$, and $q > q_1 > q_2 > \dots$ form a spanning pair of $L$; and for all positive integers $k$, $p_k' \prec q \vee p_k'$. \\

Dually, we use the join semidistributive laws and the assumption that $L$ has finite width to construct a chain $q > q_1' > q_2' > \dots$ such that: $(p,q)$, $p < p_1' < p_2' < \dots$, and $q > q_1' > q_2' > \dots$ form a spanning pair of $L$; and for all positive integers $k$, $p \wedge q_k' \prec q_k'$. \\

Hence, the sublattice of $L$ generated by $\{p, p_1', p_2', \dots \} \cup \{q, q_1', q_2', \dots \}$ is isomorphic to $\textbf{2} \times \mathbb{Z}$. We denote this sublattice by $\textbf{2} \times \mathbb{Z}$ and denote its elements by $(i,k)$.\\

To prove the last part of the theorem, let $r \in L \backslash (\textbf{2} \times \mathbb{Z})$. Suppose that $(0,m) < r < (1,n)$ for some integers $m \leq n$ where $(1,m) \nleq r$ and $r \nleq (0,n)$. Since $(0,n) \prec (1,n)$ in $L$, $(1,m-1) \wedge r = (1,m-1) \wedge (0,n) < (1,m-1) \wedge (r \vee (0,n))$. But this contradicts the fact that $L$ join semidistributive. This completes the proof.\\
\end{proof}

Hence, for lattices not isomorphic to $\textbf{2} \times Z$ for some chain $Z$, spanning pairs give a way of identifying where A. Day's doubling construction cannot be used. It would be nice if there were a way to more easily recognize where such spanning pairs may appear. For this, I will create and prove Theorem \ref{spanning pair SD}.

\begin{theorem} \label{spanning pair SD}

Let $L$ be a semidistributive lattice and assume that there exists a positive integer $N$ satisfying the following: If $A$ is an antichain of $L$ where every element of $A$ is join reducible or meet reducible, then $|A| \leq N$.\\

If $|p,q|$ is a spanning pair, then there exists a lattice embedding $f : \textbf{2} \times \mathbb{Z} \to L$ such that $p = f(0,0)$, $q = f(0,1)$, and $f(0,k) \prec f(1,k)$ in $L$ for all $k \in \mathbb{Z}$. Moreover, if $r \in L \backslash f(\textbf{2} \times \mathbb{Z})$, then $(0,m) < r < (1,n)$ for some integers $m \leq n$ implies that $r \leq (0,n)$ or $(1,m) \leq r$.

\end{theorem}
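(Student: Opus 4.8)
The plan is to reproduce, under the cleaner hypotheses stated here, the construction carried out in the proof of Theorem~\ref{spanning pair}: rather than invoking a doubling isomorphism, I would build the embedded copy of $\textbf{2}\times\mathbb{Z}$ directly out of the spanning pair, using semidistributivity of $L$ in place of the ``no doubly reducible elements'' hypothesis and using the bound $N$ in place of finite width. Write the spanning pair as $p\prec q$ with $p<p_1<p_2<\dots$ and $q>q_1>q_2>\dots$. First I would thin the increasing sequence to a subsequence $p_{n_1}<p_{n_2}<\dots$ along which the joins $a_k := q\vee p_{n_k}$ are strictly increasing; this is possible because $p,p_1,p_2,\dots$ has no upper bound, so the non-decreasing sequence $q\vee p_m$ cannot be eventually constant.

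The crux is assertion \textbf{(1)}: for some $k$ there is an element $p_1'$ with $p_{n_k}\le p_1'\prec a_k$. I would prove this by contradiction, exactly as in Theorem~\ref{spanning pair}. If \textbf{(1)} fails, then each interval $[p_{n_k},a_k]$ has ``room'' just below $a_k$, and one can inductively select elements $r_{1,j}$ lying strictly between $a_{m_j}\wedge p_{0,m_j+1}$ and $a_{m_j}$; at each stage the meet semidistributive law is what forbids the alternative in which two such candidate meets collapse onto $p$ (one gets $q\wedge s=q\wedge t=p$ while $q\wedge(s\vee t)=q$, contradicting meet semidistributivity). Iterating produces a sequence of antichains $B_2,B_3,\dots$ with $|B_k|=k$ in which every element is meet reducible; since all elements of each $B_k$ are in particular join or meet reducible, the bound $N$ forces $|B_k|\le N$, a contradiction once $k>N$. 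A second, ``outer'' iteration of the same device (the sequences $(p_{n,k})_k$ and elements $r_n$) produces antichains $A_k$ of join reducible elements of unbounded size, again contradicting $N$; this establishes \textbf{(1)}. This inductive antichain construction, with its repeated appeals to semidistributivity to keep the relevant meets and joins distinct, is the main obstacle and the most technical part of the argument.

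Next I would upgrade \textbf{(1)} to a full left rail. Applying \textbf{(1)} repeatedly yields an increasing chain $p<p_1'<p_2'<\dots$ with $p_k'\prec q\vee p_k'$; the key verification that $p_1'\vee p_{n_j}<q\vee p_{n_j}$ for $j\ge k$ (so that the chain does not prematurely merge into the joins) again uses meet semidistributivity, since equality would give $q\wedge p_1'=q\wedge p_{n_j}=p$ but $q\wedge(p_1'\vee p_{n_j})=q$. One checks that $(p,q)$ together with this new chain and $q>q_1>\dots$ is still a spanning pair. Dually, using the join semidistributive law together with the bound $N$ on antichains of join reducible elements, I would construct a decreasing chain $q>q_1'>q_2'>\dots$ with $p\wedge q_k'\prec q_k'$ forming the right rail.

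Finally I would assemble the ladder. The sublattice of $L$ generated by $\{p,p_1',p_2',\dots\}\cup\{q,q_1',q_2',\dots\}$ is isomorphic to $\textbf{2}\times\mathbb{Z}$: the two chains are its rails, the relations $p_k'\prec q\vee p_k'$ and $p\wedge q_k'\prec q_k'$ are its rungs, and the distinguished relation $p\prec q$ becomes a rung as well, giving the embedding $f$ with the required normalization $f(0,0)=p$ and covering relations $f(0,k)\prec f(1,k)$ for all $k$. For the ``moreover'' clause I would argue as in the last paragraph of the proof of Theorem~\ref{spanning pair}: given $r\in L\setminus f(\textbf{2}\times\mathbb{Z})$ with $(0,m)<r<(1,n)$, if both $(1,m)\nleq r$ and $r\nleq(0,n)$ held, then using the rung covering $(0,n)\prec(1,n)$ one obtains $(1,m-1)\wedge r=(1,m-1)\wedge(0,n)<(1,m-1)\wedge(r\vee(0,n))$, contradicting semidistributivity; hence $r\le(0,n)$ or $(1,m)\le r$, as required.
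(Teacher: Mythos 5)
Your proposal follows essentially the same route as the paper's own proof: the same thinning of the sequence so that the joins $a_k = q \vee p_{n_k}$ strictly increase, the same contradiction argument for assertion \textbf{(1)} via the antichains $B_k$ of meet reducible elements and $A_k$ of join reducible elements against the bound $N$, the same use of meet semidistributivity to keep $p_1' \vee p_{n_j}$ below $q \vee p_{n_j}$, the dual construction of the lower rail, and the identical final argument for the ``moreover'' clause. No substantive difference to report.
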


\begin{proof} (Brian T. Chan) Let $|p,q|$ be a spanning pair of $L$ with $p < p_1 < p_2 < \dots$ and $q > q_1 > q_2 > \dots$ Because $|p,q|$ is a spanning pair, we can define the following infinite subsequence $p_{n_1} < p_{n_2} < \dots$ of $p < p_1 < p_2 < \dots$:\\

Let $p_{n_1} = p_1$, and for all positive integers $k$, let $p_{n_{k+1}}$ satisfy $q \vee p_{n_k} < q \vee p_{n_{k+1}}$. Dually, define a subsequence $q > q_{n_1} > q_{n_2} > \dots$ of $q > q_1 > q_2 > \dots$ in the same way that $p < p_{n_1} < p_{n_2} < \dots$ was defined. Now we show \textbf{(1)}:

\begin{center}
\textbf{(1)} \emph{There exists a positive integer $k$ and an element $p_1'$ such that $p_{n_k} \leq p_1' \prec q \vee p_{n_k}$.}
\end{center}

For all positive integers $k$, set $a_k = q \vee p_{n_k}$ and $p_{0,k} = p_{n_k}$. Suppose that \textbf{(1)} is false, we first show by contradiction the following.

\begin{center}
\textbf{(2)} \emph{There is a positive integer $m_1$ and an element $r_1$ such that $p_{0,m_1} < r_1 < a_{m_1}$ and the following occurs with $p_{1,k} = p_{0,m_1 + k - 1}$: For all positive integers $k$, $r_1 \vee p_{1,k} > p_{1,k}$.}
\end{center}

Since \textbf{(1)} is assumed to be false, let $r_{1,1}$ satisfy $p_{0,1} \leq a_1 \wedge p_{0,2} < r_{1,1} < a_1$. Then as $r_{1,1} \nleq p_{0,2}$, $r_{1,1} \vee p_{0,2} > p_{0,2}$. If $r_{1,1} \vee p_{0,k} > p_{0,k}$ for all $k$, then set $m_1 = 1$ and $r_1 = r_{1,1}$. Otherwise, let $m' > 2$ be the least positive integer such that $r_{1,1} \leq p_{0,m'}$. We note that for all $2 \leq k < m'$, $r_{1,1} \vee p_{0,k} < a_k$. Now pick an $r_{1,2}$ such that $p_{0,m'} \leq a_{m'} \wedge p_{0,m'+1} < r_{1,2} < a_{m'}$, $r_{1,2} \wedge a_2 > r_{1,1} \vee p_{0,2}$, and $r_{1,2} \wedge a_1 > r_{1,1}$. Such an element $r_{1,2}$ exists because \textbf{(1)} is assumed to be false and $p \prec q$. Otherwise, there is an $s$ such that $a_{m'} \wedge p_{0,m'+1} < s < a_{m'}$ and a $t$ such that $r_{1,1} < t < a_1$ or $r_{1,1} \vee p_{0,2} < t < a_2$ which satisfy the following: $q \wedge s = q \wedge t = p$, but $q \wedge (s \vee t) = q \wedge a_{m'} = q$ which would violate the meet semidistributive laws.\\

Like before, $r_{1,2} \vee p_{0,m'+1} > p_{0,m'+1}$ and for some positive integer $m''$ satisfying $m'' \geq m' + 2$, $r_{1,2} \leq p_{0,m''}$. We note that for all $m' < k < m''$, $r_{1,2} \vee p_{0,k} < a_k$. By the above arguments, there is an element $r_{1,3}$ such that: $p_{0,m''} \leq a_{m''} \wedge p_{0,m''+1} < r_{1,3} < a_{m''}$, $r_{1,3} \wedge a_{m'+1} > r_{1,2} \vee p_{0,m'+1}$, $r_{1,3} \wedge a_{m'} > r_{1,2}$, $r_{1,3} \wedge a_2 > r_{1,2} \wedge a_2$, and $r_{1,3} \wedge a_1 > r_{1,2} \wedge a_1$. As we are supposing that both \textbf{(1)} and \textbf{(2)} are false, this process can be continued indefinitely. But then there is a sequence of antichains $B_2, B_3, B_4, \dots$ in $L$ where $B_2 = \{ r_{1,2} \wedge a_2, r_{1,3} \wedge a_1 \}, B_3 = \{ r_{1,3} \wedge a_{m'+1}, r_{1,4} \wedge a_{m'}, r_{1,5} \wedge a_2 \}, \dots$, $|B_k| = k$ for all $k$, and every element of $B_k$ is meet reducible. But this is contrary to assumption. Hence, \textbf{(2)} follows assuming that \textbf{(1)} is false.\\

We note that $r_1 < r_1 \vee p_{1,2} < r_1 \vee p_{1,3} < \dots$. Since $p \prec q$, $q \wedge r_1 = q \wedge p_{1,k} = p$ for all $k \geq 2$. So by the meet semidistributive laws, $p_{1,k} < r_1 \vee p_{1,k} < q \vee p_{1,k} = a_{m_1 + k - 1}$ for all $k \geq 2$. Since \textbf{(1)} is assumed to be false, the above proof of \textbf{(2)}, can be used to show that there is a positive integer $m_2 \geq m_1$ and an element $r_2$ such that $r_1 \vee p_{1,m_2} < r_2 < a_{m_2}$ and the following occurs with $p_{2,k} = r_1 \vee p_{1,m_2 + k - 1}$: For all positive integers $k$, $r_2 \vee p_{2,k} > p_{2,k}$. We note that $(p,q)$ with the sequences $p < r_2 \vee p_{2,1} < r_2 \vee p_{2,2} < r_2 \vee p_{2,3} < \dots$ and $q > q_1 > q_2 > \dots$ also form a spanning pair of $L$. Hence, we can replicate the above arguments to produce the following infinite sequence of sequences $(p_{0,k})_k, (p_{1,k})_k, (p_{2,k})_k, \dots$ and the following infinite sequence of elements $r_1, r_2, r_3, \dots$ satisfying $p_{n,1} < r_n < a_{m_n}$ for all $n$. In particular, there is a sequence of antichains $A_2, A_3, A_4, \dots$ in $L$ where $A_2 = \{ r_2 \vee p_{2,2}, p_{2,3} \}, A_3 = \{r_3 \vee p_{3,2}, p_{3,3}, r_1 \vee p_{0, m_3 + k - 1}\}, \dots$, every element of $A_k$ is join reducible, and for all $k$, $|A_k| = k$. But this is also contrary to assumption.\\

Hence, \textbf{(1)} is true. Let $p_1'$ and $k$ be as described in \textbf{(1)}. To see that $p_1' \vee p_{n_j} < q \vee p_{n_j}$ for all $j \geq k$, suppose that $p_1' \vee p_{n_j} = q \vee p_{n_j}$. Since $p \prec q$, $q \wedge p_1' = q \wedge p_{n_j} = p$ and $q \wedge (p_1' \vee p_{n_j}) = q \wedge (q \vee p_{n_j}) = q$. But that is impossible as $L$ is meet semidistributive. Using assertion \textbf{(1)} indefinitely, we obtain a chain $p < p_1' < p_2' < \dots$ such: $(p,q)$, $p < p_1' < p_2' < \dots$, and $q > q_1 > q_2 > \dots$ form a spanning pair of $L$; and for all positive integers $k$, $p_k' \prec q \vee p_k'$.\\

Dually, we use the join semidistributive laws and the assumption that $L$ has finite width to construct a chain $q > q_1' > q_2' > \dots$ such that: $(p,q)$, $p < p_1' < p_2' < \dots$, and $q > q_1' > q_2' > \dots$ form a spanning pair of $L$; and for all positive integers $k$, $p \wedge q_k' \prec q_k'$.\\

Hence, the sublattice of $L$ generated by $\{p, p_1', p_2', \dots \} \cup \{q, q_1', q_2', \dots \}$ is isomorphic to $\textbf{2} \times \mathbb{Z}$. We denote this sublattice by $\textbf{2} \times \mathbb{Z}$ and denote its elements by $(i,k)$.\\

To prove the last part of the theorem, let $r \in L \backslash (\textbf{2} \times \mathbb{Z})$. Suppose that $(0,m) < r < (1,n)$ for some integers $m \leq n$ where $(1,m) \nleq r$ and $r \nleq (0,n)$. Since $(0,n) \prec (1,n)$ in $L$, $(1,m-1) \wedge r = (1,m-1) \wedge (0,n) < (1,m-1) \wedge (r \vee (0,n))$. But this contradicts the fact that $L$ join semidistributive. This completes the proof.\\
\end{proof}

One application of the developments is as follows. It appears that Theorem \ref{spanning pair} and Theorem \ref{spanning pair SD} combined with A. Day's characterization of finite bounded lattices could be useful for describing finite width sublattices of free lattices. 
All sublattices of a free lattice are semidistributive and satisfy Whitman's condition; and a lattice which satisfies Whitman's condition has no doubly reducible elements. Moreover, in a finite width lattice there is a positive integer $N$ such that \emph{every} antichain has cardinality at most $N$.\\

It is hoped that the work done in this paper may lead to new ways of attacking the following open problem: \emph{Which countable lattices are isomorphic to a sublattice of a free lattice?} Concluding this paper, we quote J. Reinhold from \cite{WDLFL}: \emph{we are still far away from a characterization of arbitrary sublattices of free lattices}.

\end{document}